\documentclass[11pt,a4paper]{article}

\usepackage[utf8]{inputenc}
\usepackage[T1]{fontenc}
\usepackage{amsmath,amssymb,amsthm}
\usepackage{mathtools}
\usepackage{enumitem}
\usepackage{booktabs}
\usepackage{array}
\usepackage{hyperref}
\usepackage[margin=2.5cm]{geometry}

\newtheorem{theorem}{Theorem}[section]
\newtheorem{lemma}[theorem]{Lemma}
\newtheorem{proposition}[theorem]{Proposition}
\newtheorem{corollary}[theorem]{Corollary}
\theoremstyle{definition}
\newtheorem{definition}[theorem]{Definition}
\newtheorem{example}[theorem]{Example}
\theoremstyle{remark}
\newtheorem{remark}[theorem]{Remark}

\newcommand{\Fq}{\mathbb{F}_q}
\newcommand{\Fp}{\mathbb{F}_p}
\newcommand{\Zn}[1]{\mathbb{Z}_{#1}}
\newcommand{\N}{\mathbb{N}}
\newcommand{\Z}{\mathbb{Z}}
\newcommand{\R}{\mathbb{R}}
\newcommand{\Dn}{D^{n}}
\newcommand{\abs}[1]{\lvert #1 \rvert}

\newcommand{\floor}[1]{\lfloor #1 \rfloor}
\newcommand{\ceil}[1]{\lceil #1 \rceil}

\DeclareMathOperator{\Fix}{Fix}
\renewcommand{\Im}{\operatorname{Im}}
\DeclareMathOperator{\ord}{ord}
\DeclareMathOperator{\dmin}{d_{\min}}
\DeclareMathOperator{\Ef}{Ef}

\title{An Algebraic Framework for Data Systems:\\
Classification and Optimization of Algebraic Structures\\
for Data Organization and Coding}

\author{Kendy Inoa\thanks{Based on the author's M.Sc.\ thesis,
Pontificia Universidad Cat\'olica Madre y Maestra (PUCMM), 2025.
An earlier version was presented as a poster at the
2026 International Congress of Mathematicians (ICM 2026),
July 22, 2026, Pennsylvania Convention Center,
Philadelphia, PA, USA
(DOI: \texttt{10.5281/zenodo.21437132}).}\\[4pt]
\small Pontificia Universidad Cat\'olica Madre y Maestra (PUCMM)\\
\small Santo Domingo, Dominican Republic\\
\small ORCID: 0009-0001-4182-6859}

\date{}

\begin{document}
\maketitle

\begin{abstract}
Modern data systems are commonly studied through computational and
information-theoretic methods, while their algebraic properties remain
largely unexplored.  This paper introduces a mathematical framework
for modelling data systems using algebraic structures drawn from group
theory and coding theory.  The central result is an \emph{axiomatic
classifier}: the coding-theoretic capability of a finite algebraic
structure (group, ring, or field) is shown to be determined by its
axiom signature -the set of algebraic axioms it satisfies— with each axiom acting as a gate that enables a specific capability
(inverses enable the algebraic Hamming metric; commutativity enables
syndrome decoding via quotient groups; field structure enables MDS
codes via polynomial evaluation).
Three types of results are presented.
\emph{Proved:} the axiomatic classification theorem; that ACID
transactions form a monoid under sequential composition (not a
group); that schema-preserving transformations form a finite group
whose orbits, counted by Burnside's lemma, yield exact
deduplication of equivalent configurations; and that every
integrity-preserving bijection of a data system defines an
algebraisable equivalence class under a finite group action.
\emph{Demonstrated:} explicit code constructions over
$\mathbb{F}_5$, $\mathbb{Z}_4$ (yielding codes inaccessible to
classical $\mathbb{F}_q$-linear theory), and $(2^U, \triangle)$
(yielding group-theoretic anomaly detection for set-valued data).
\emph{Proposed:} an encoding efficiency measure~Ef and an
optimization functional~$\Phi$ with tunable weights, whose induced
ranking is verified computationally to be consistent with the
axiomatic classification for the structures studied.
\end{abstract}

\smallskip
\noindent\textbf{Keywords:} algebraic coding theory, finite groups,
monoids, data systems, Burnside's lemma, Reed--Solomon codes,
encoding efficiency, classification of algebraic structures.

\medskip
\noindent\textbf{MSC 2020:} 94B05, 20M14, 68P20, 20B05, 94B60.

\section{Introduction}
\label{sec:intro}

Classical coding theory operates within the well-established framework
of linear codes over finite fields~$\Fq$
\cite{macwilliams1977,van_lint_1999}.  While this framework has
achieved remarkable depth and elegance, it does not directly address
the algebraic properties inherent in modern data structures: database
transactions, relational schemas, set-valued records, and symbolic
data sets carry natural algebraic operations---addition modulo~$n$,
symmetric difference, function composition---that remain largely
unexplored from a coding-theoretic standpoint.

This paper addresses the following foundational question:

\begin{quote}
\emph{Given a finite set~$D$ equipped with a binary operation~$\star$,
under what conditions can one construct error-correcting codes
over~$(D,\star)$ that inherit the algebraic structure of the data
domain?}
\end{quote}

We answer this question constructively.  The main contributions are:
\begin{enumerate}[label=(\roman*)]
  \item a formal definition of algebraic codes over arbitrary finite
        group structures, together with a generalised metric
        that subsumes the classical Hamming distance
        (Section~\ref{sec:framework});
  \item existence and construction theorems for codes over finite
        abelian groups, stratified into an optimal regime (groups
        isomorphic to additive groups of finite fields, yielding MDS
        codes) and a general regime (arbitrary abelian groups, yielding
        product codes) (Section~\ref{sec:existence});
  \item a systematic classification methodology comprising an encoding
        efficiency measure~$\Ef$ and an optimisation functional~$\Phi$
        with tunable weights, together with a robustness theorem
        establishing that the induced efficiency ranking is independent
        of the specific weight choice
        (Section~\ref{sec:classification}); and
  \item four original results connecting abstract algebra with data
        system theory: the monoid structure of ACID transactions, a
        novel application of Burnside's lemma to database schema
        equivalence, and a general algebraic characterisation of
        data integrity under finite group actions
        (Section~\ref{sec:results}).
\end{enumerate}

The framework generalizes classical linear coding theory: when
$(D,\star) = (\Fq, +)$, all definitions and theorems reduce to
standard results.  The novel content lies in the systematic treatment
of structures beyond finite fields---cyclic groups, power-set algebras
under symmetric difference, monoids of transactions---and in the formal
connections established between group-theoretic properties and
coding-theoretic performance.  Prior work on codes over non-abelian
groups includes metacyclic codes~\cite{sabin1995} and recent
constructions~\cite{pillado2016}; for codes over finite abelian
groups, see~\cite{dougherty2024}.

\begin{remark}[On the level of generality]
\label{rem:modules}
The definitions of this paper (Section~\ref{sec:framework}) are
stated for finite groups, without assuming commutativity.
The existence and construction results
(Sections~\ref{sec:existence}--\ref{sec:classification}) require
abelian hypotheses where needed and are stated accordingly.
Every finite abelian group carries a canonical $\Z$-module
structure, and every group homomorphism between abelian groups is a
$\Z$-module homomorphism; the underlying theory thus extends to
modules over commutative rings (cf.\ \cite{dougherty2015codes}),
and the author's thesis~\cite{inoa_thesis} develops the full
module-theoretic framework.  The specialization to finite groups
adopted here yields the most explicit constructions and
computable instances of the efficiency functional~$\Phi$.
\end{remark}

\paragraph{Organisation.}
Section~\ref{sec:framework} establishes the algebraic framework and
core definitions.  Section~\ref{sec:existence} develops existence
theorems and metric preservation.  Section~\ref{sec:classification}
presents the classification methodology and optimisation functional.
Section~\ref{sec:results} contains the four main original results.
Section~\ref{sec:open} states open problems.

\section{Algebraic Framework for Data Coding}
\label{sec:framework}

\subsection{Fundamental Definitions}

\begin{definition}[Algebraic data structure]
\label{def:ads}
Let~$D$ be a finite nonempty set of data elements.
An \emph{algebraic data structure} is a pair~$(D,\star)$
in which~$(D,\star)$ is a finite group.
\end{definition}

\begin{definition}[Algebraic code over data]
\label{def:code}
Let~$(D,\star)$ be an algebraic data structure with identity
element~$e$, and let~$(M,\circ)$ be a finite group
(the \emph{message space}).
An \emph{algebraic code over data} is a pair~$(C,\varphi)$
where~$C \subseteq \Dn$ and
\[
  \varphi \colon M \longrightarrow \Dn
\]
is an injective group homomorphism satisfying the structural
preservation property:
\begin{equation}\label{eq:hom}
  \varphi(m_1 \circ m_2)
    = \varphi(m_1) \star \varphi(m_2)
  \quad\text{for all } m_1, m_2 \in M,
\end{equation}
where operations in~$\Dn$ are performed componentwise.
The code is~$C = \Im(\varphi)$.
\end{definition}

\begin{remark}[Algebraic hierarchy]
\label{rem:z-module}
Definition~\ref{def:ads} places no restriction on commutativity:
$(D,\star)$ may be abelian or non-abelian.
The definitions of algebraic code (Definition~\ref{def:code})
and algebraic distance (Definition~\ref{def:distance}) require
only the group axioms (closure, associativity, identity, inverses);
commutativity is not used.

The constructions and existence results of
Sections~\ref{sec:existence}--\ref{sec:classification} require
progressively stronger hypotheses, reflecting the following
algebraic hierarchy:
\begin{center}
\small
\begin{tabular}{@{}p{2.2cm}p{4.2cm}p{5.8cm}@{}}
\toprule
Level & Structure & What it enables \\
\midrule
General
  & Finite group $(D,\star)$
  & Distance~$d^{\star}_H$, isometry, code definition \\[3pt]
Intermediate
  & Finite abelian group ($= \Z$-module)
  & Syndrome decoding, quotient $\Dn\!/C$ as group,
    product codes \\[3pt]
Specialised
  & $D \cong \Fq$ (finite field)
  & Polynomial evaluation $\to$ RS $\to$ MDS \\[3pt]
\midrule
Full generality
  & $A$-module over commutative ring~$A$
  & Linear codes over rings
    (cf.~$\Zn{4}$-linearity~\cite{hammons1994}) \\
\bottomrule
\end{tabular}
\end{center}
Every abelian group is naturally a $\Z$-module
(with $n \cdot d = d \star d \star \cdots \star d$),
so the encoding map~$\varphi$ of Definition~\ref{def:code}
is equivalently a $\Z$-module homomorphism when $(D,\star)$
is abelian.  When~$(D,\star)$ admits a field structure
--- that is, when $D \cong \Fq$ for some prime power~$q$
--- the encoding becomes $\Fq$-linear, and one recovers the
classical notion of a linear~$[n,k]$ code with generator
map~$\varphi(u) = uG$.
The author's thesis~\cite{inoa_thesis} develops the full
$A$-module framework; the present paper works primarily at the
abelian group level, where the most explicit constructions and
computable instances of the efficiency functional~$\Phi$ are
obtained.
\end{remark}

\begin{definition}[Algebraic Hamming distance]
\label{def:distance}
Let~$(D,\star,e)$ be a finite group with identity~$e$.
For $x, y \in \Dn$, the \emph{algebraic Hamming distance}
is
\begin{equation}\label{eq:dist}
  d^{\star}_H(x,y)
    = \bigl\lvert
        \bigl\{
          i \in \{1,\ldots,n\}
          : x_i \star y_i^{-1} \neq e
        \bigr\}
      \bigr\rvert.
\end{equation}
This is the number of coordinates at which the ``difference''
$x_i \star y_i^{-1}$ is nontrivial.
\end{definition}

\begin{remark}\label{rem:hamming}
When $(D,\star) = (\Fq, +)$, one has
$x_i \star y_i^{-1} = x_i - y_i$, and $d^{\star}_H$
coincides with the classical Hamming distance.  The presence
of the inverse~$y_i^{-1}$ in~\eqref{eq:dist} is essential:
without it, the expression $x_i \star y_i$ may be nontrivial
even when $x_i = y_i$, unless every element of~$D$ is
self-inverse (i.e., $D$ has exponent~$2$).
\end{remark}

\begin{proposition}[Inheritance of algebraic properties]
\label{prop:inheritance}
Let~$(D,\star)$ be an algebraic data structure and
$C \subseteq \Dn$ an algebraic code.
If~$(D,\star)$ satisfies an algebraic property~$\mathcal{P}$
(associativity, commutativity, existence of an identity),
then~$(C,\star)$ satisfies~$\mathcal{P}$ under the
componentwise extension of~$\star$ to~$\Dn$.
\end{proposition}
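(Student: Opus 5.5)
The plan has two stages: first lift each property from $(D,\star)$ to $\Dn$ coordinate by coordinate, then restrict from $\Dn$ to $C$. The one step that is not purely formal is showing that $C$ is closed under $\star$ and contains the identity. Here I will use Definition~\ref{def:code} directly: $C=\Im(\varphi)$ for a group homomorphism $\varphi\colon M\to\Dn$.

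First I check that $C$ is closed under the componentwise operation. Given $x,y\in C$, choose $m_1,m_2\in M$ with $x=\varphi(m_1)$ and $y=\varphi(m_2)$. Then by \eqref{eq:hom},
\[
x\star y=\varphi(m_1)\star\varphi(m_2)=\varphi(m_1\circ m_2)\in C .
\]
So $\star$ restricts to a binary operation on $C$. Similarly, since $\varphi$ is a homomorphism of groups, $\varphi(e_M)\star\varphi(e_M)=\varphi(e_M)$. Cancelling in the group $\Dn$ gives $\varphi(e_M)=(e,\dots,e)$, so the identity of $\Dn$ lies in $C$. The same argument gives $\varphi(m^{-1})=\varphi(m)^{-1}$, so $C$ is in fact a subgroup of $\Dn$, although the statement does not require this.

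Next I verify the properties on $\Dn$ componentwise. For associativity, the $i$-th coordinate of $(x\star y)\star z$ is $(x_i\star y_i)\star z_i$, which equals $x_i\star(y_i\star z_i)$ because $D$ is associative. Commutativity follows in the same way from $x_i\star y_i=y_i\star x_i$. For the identity, $(e,\dots,e)$ is a two-sided identity on $\Dn$ because $e$ is one on $D$.

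Finally I pass to $C$. Associativity and commutativity are universally quantified identities, so they hold on any subset of $\Dn$ that is closed under $\star$, and in particular on $C$. The identity property holds on $C$ because $(e,\dots,e)\in C$ by the first step.

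The main point requiring care is the closure of $C$, together with the identity lying in $C$. For an arbitrary subset $C\subseteq\Dn$ both can fail, so the proof genuinely relies on the hypothesis $C=\Im(\varphi)$ with $\varphi$ a homomorphism. I will state that reliance explicitly.
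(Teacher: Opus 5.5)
Your proof is correct and uses the same coordinate-by-coordinate argument as the paper, which simply calls the result immediate from the componentwise definition of $\star$ on $D^n$. You also check explicitly, from $C=\operatorname{Im}(\varphi)$ and the homomorphism property of $\varphi$, that $C$ is closed under $\star$ and contains $(e,\dots,e)$. The paper leaves this implicit, yet the identity clause needs it (and so does $(C,\star)$ being a magma at all), so your version is the more complete of the two.
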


\begin{proof}
Immediate from the componentwise definition.  For instance,
associativity: $(a \star b) \star c = a \star (b \star c)$
because $(a_i \star b_i) \star c_i = a_i \star (b_i \star c_i)$
holds in each coordinate by the associativity of~$(D,\star)$.
Commutativity and the existence of an identity follow by the
same coordinate-by-coordinate argument.
\end{proof}

\subsection{Weighted Metric Structure}
\label{subsec:metric}

The algebraic Hamming distance of Definition~\ref{def:distance}
counts differing positions uniformly.  A finer metric arises
when one assigns weights to group elements.

\begin{lemma}[Weighted algebraic metric]
\label{lem:metric}
Let~$(D,\star)$ be a finite abelian group with identity~$e$,
and let $\delta \colon D \to \R_{\geq 0}$ be a function
satisfying:
\begin{enumerate}[label=(\roman*)]
  \item $\delta(e) = 0$ and $\delta(d) > 0$ for all
        $d \neq e$;
  \item $\delta(d_1 \star d_2) \leq \delta(d_1) + \delta(d_2)$
        for all $d_1, d_2 \in D$;
  \item $\delta(d^{-1}) = \delta(d)$ for all $d \in D$.
\end{enumerate}
Then the function
\begin{equation}\label{eq:weighted-dist}
  d^{\star}(x,y)
    = \sum_{i=1}^{n} \delta\bigl(x_i \star y_i^{-1}\bigr)
\end{equation}
is a metric on~$\Dn$.  Moreover, the group operation is an
isometry:
\begin{equation}\label{eq:isometry}
  d^{\star}(a \star c,\, b \star c) = d^{\star}(a,b)
  \quad\text{for all } a,b,c \in \Dn.
\end{equation}
\end{lemma}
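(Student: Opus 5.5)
The plan is to verify the metric axioms coordinatewise and then sum over $i=1,\ldots,n$. All of the work happens in a single coordinate, where the relevant quantity is $\delta(x_i \star y_i^{-1})$. Nonnegativity is immediate from $\delta \colon D \to \R_{\geq 0}$.

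\emph{Identity of indiscernibles.} If $x = y$, each term equals $\delta(e) = 0$ by (i). Conversely, if $d^{\star}(x,y) = 0$, then every summand vanishes, since each is nonnegative. Condition (i) then forces $x_i \star y_i^{-1} = e$, so $x_i = y_i$ for every $i$.

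\emph{Symmetry.} Use the group identity $(x_i \star y_i^{-1})^{-1} = y_i \star x_i^{-1}$. Together with (iii), this gives $\delta(y_i \star x_i^{-1}) = \delta(x_i \star y_i^{-1})$ termwise. Summing yields $d^{\star}(y,x) = d^{\star}(x,y)$.

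\emph{Triangle inequality.} For $x,y,z \in \Dn$, insert $z_i$ via associativity and the inverse axiom:
\[
  x_i \star y_i^{-1} = (x_i \star z_i^{-1}) \star (z_i \star y_i^{-1}).
\]
Applying (ii) and summing over $i$ gives $d^{\star}(x,y) \leq d^{\star}(x,z) + d^{\star}(z,y)$.

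\emph{Isometry.} For \eqref{eq:isometry}, compute in each coordinate
\[
  (a_i \star c_i) \star (b_i \star c_i)^{-1} = a_i \star c_i \star c_i^{-1} \star b_i^{-1} = a_i \star b_i^{-1}.
\]
This uses $(b_i \star c_i)^{-1} = c_i^{-1} \star b_i^{-1}$, which holds in any group. The summands therefore coincide, and the distances agree.

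None of these steps is a real obstacle; the proof is routine. The only point needing care is the isometry. For right translation as stated, the computation above goes through in any group, so commutativity is not actually needed. For left translation, one would need to write $c_i \star a_i \star b_i^{-1} \star c_i^{-1}$ and use commutativity, or conjugation invariance of $\delta$, to return to $a_i \star b_i^{-1}$. In the proof I would note that the abelian hypothesis makes the isometry two-sided.
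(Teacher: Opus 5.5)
Your proposal is correct and follows essentially the same route as the paper: verify each metric axiom coordinatewise and sum over $i$. This means nonnegativity and indiscernibility via (i), symmetry via (iii) together with $(x_i \star y_i^{-1})^{-1} = y_i \star x_i^{-1}$, the triangle inequality by factoring through the intermediate point and applying (ii), and right-translation invariance by cancelling $c_i \star c_i^{-1}$. Your side remark is also accurate: none of these steps uses commutativity, which would be needed only for left-translation invariance (or could be replaced there by conjugation invariance of $\delta$).
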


\begin{proof}
\emph{Non-negativity and identity of indiscernibles.}
Each summand $\delta(x_i \star y_i^{-1}) \geq 0$ by
construction, and $d^{\star}(x,y) = 0$ if and only if
$x_i \star y_i^{-1} = e$ for all~$i$, i.e., $x = y$.

\emph{Symmetry.}  By~(iii),
\[
  d^{\star}(x,y)
    = \sum_i \delta(x_i \star y_i^{-1})
    = \sum_i \delta\bigl((x_i \star y_i^{-1})^{-1}\bigr)
    = \sum_i \delta(y_i \star x_i^{-1})
    = d^{\star}(y,x).
\]

\emph{Triangle inequality.}  Write
$x_i \star z_i^{-1}
  = (x_i \star y_i^{-1}) \star (y_i \star z_i^{-1})$.
Applying~(ii) coordinate by coordinate:
\[
  d^{\star}(x,z)
    = \sum_i \delta(x_i \star z_i^{-1})
    \leq \sum_i
      \bigl[\delta(x_i \star y_i^{-1})
            + \delta(y_i \star z_i^{-1})\bigr]
    = d^{\star}(x,y) + d^{\star}(y,z).
\]

\emph{Isometry.}
\[
  d^{\star}(a \star c,\, b \star c)
    = \sum_i \delta\bigl(
        (a_i \star c_i) \star (b_i \star c_i)^{-1}
      \bigr)
    = \sum_i \delta\bigl(
        a_i \star c_i \star c_i^{-1} \star b_i^{-1}
      \bigr)
    = \sum_i \delta(a_i \star b_i^{-1})
    = d^{\star}(a,b).
\]
The cancellation $c_i \star c_i^{-1} = e$ uses the group
axioms of~$(D,\star)$.
\end{proof}

\begin{remark}\label{rem:discrete-weight}
Taking $\delta$ to be the discrete weight
$\delta(d) = \mathbf{1}_{d \neq e}$ recovers the algebraic
Hamming distance of Definition~\ref{def:distance}:
$d^{\star} = d^{\star}_H$.  Thus
Lemma~\ref{lem:metric} subsumes
Definition~\ref{def:distance} as a special case.
\end{remark}

\section{Existence and Construction}
\label{sec:existence}

The construction of algebraic codes over~$(D,\star)$ depends on
the algebraic type of~$D$.  We distinguish two cases, corresponding
to the two regimes identified in the introduction: the \emph{field
case}, where~$(D,\star)$ is isomorphic to the additive group of a
finite field and MDS codes exist; and the \emph{general abelian
case}, where product codes are obtained via the Fundamental Theorem
of Finite Abelian Groups.

\subsection{The Field Case: MDS Codes}

\begin{theorem}[Existence of algebraic codes --- field case]
\label{thm:field-case}
Let~$(D,\star)$ be a finite abelian group whose order
$\abs{D} = q = p^k$ is a prime power, and suppose that
$(D,\star) \cong (\Fq,+)$.  Then there exists a code
$C \subseteq \Dn$ with parameters
\[
  n \leq q, \quad k \leq n, \quad d = n - k + 1,
\]
attaining the Singleton bound.  The code can be efficiently
decoded in time~$O(n^2)$.
\end{theorem}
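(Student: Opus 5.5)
The plan is to transport the classical Reed--Solomon construction along the given isomorphism $\psi\colon (D,\star)\to(\Fq,+)$. Extended componentwise, $\psi$ becomes a group isomorphism $\Dn\to\Fq^n$. First I check that $\psi$ preserves the algebraic Hamming distance of Definition~\ref{def:distance}: $\psi(x_i\star y_i^{-1})=\psi(x_i)-\psi(y_i)$, and $\psi(d)=0$ exactly when $d=e$. Hence $d^{\star}_H(x,y)=d_H(\psi(x),\psi(y))$, in line with Remark~\ref{rem:hamming}. It therefore suffices to build the code in $\Fq^n$ and pull it back with $\psi^{-1}$. The pullback $C=\psi^{-1}(C')$ is a subgroup of $\Dn$ with the same size and the same minimum distance as $C'$. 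The encoder $\varphi=\psi^{-1}\circ\mathrm{ev}$, where $\mathrm{ev}$ is the evaluation map defined below, is an injective homomorphism from the message group $M=(\Fq^k,+)$. So $(C,\varphi)$ is an algebraic code in the sense of Definition~\ref{def:code}.

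Next I carry out the Reed--Solomon construction. Given $1\le k\le n\le q$, I choose distinct evaluation points $\alpha_1,\dots,\alpha_n\in\Fq$, which is possible because $n\le q$. I define $\mathrm{ev}(u)=(f_u(\alpha_1),\dots,f_u(\alpha_n))$, where $f_u(X)=\sum_{j<k}u_jX^j$. This map is $\Fq$-linear, so it is additive.

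A nonzero $f_u$ has degree at most $k-1$, so it has at most $k-1$ roots. Its evaluation vector therefore has weight at least $n-k+1$. In particular $\mathrm{ev}$ is injective, since $k\le n$, and $\dmin\ge n-k+1$. For the reverse inequality I use the Singleton bound. Deleting any $d-1$ coordinates from the code must still leave an injective map, because two codewords agreeing on the remaining coordinates would be at distance at most $d-1$. This gives $q^k\le q^{n-d+1}$, i.e.\ $d\le n-k+1$. Combining the two bounds gives $d=n-k+1$, so the code is MDS. The degenerate case $k=0$ can be excluded or treated by convention.

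Finally I address decoding in $O(n^2)$. A received word $y\in\Dn$ is mapped to $\psi(y)\in\Fq^n$. I apply an $O(n^2)$ Reed--Solomon decoder there, for instance the Berlekamp--Welch approach or syndrome computation followed by Berlekamp--Massey and Chien search/Forney. The result is pulled back through $\psi^{-1}$. Applying $\psi$ coordinatewise costs $O(n)$ table lookups, assuming field operations and $\psi$ cost $O(1)$. I expect this complexity claim to be the main obstacle. The root search is naturally $O(nq)$ rather than $O(n^2)$, but since $n\le q$ is the only constraint, taking $n=q$ makes these agree. In general I would either cite the standard Berlekamp--Massey plus Chien/Forney analysis, which gives $O(n(n-k))$ field operations when evaluation is restricted to the $n$ code positions, or Gao's Euclidean-algorithm decoder, which runs in $O(n^2)$. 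Either way the claim is stated in the unit-cost model for arithmetic in $\Fq$.
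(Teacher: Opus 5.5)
Your proposal is correct and follows the same route as the paper: pull back a Reed--Solomon code along the componentwise isomorphism $\psi$, use that $\psi$ is an isometry for $d^{\star}_H$, and decode by mapping through $\psi$ and running a standard $O(n^2)$ Reed--Solomon decoder; where the paper simply cites the MDS property, you prove it directly via the root bound plus the Singleton bound. One point in your favour is that your arbitrary distinct evaluation points genuinely cover $n=q$, whereas the paper's points $1,\alpha,\dots,\alpha^{n-1}$ are distinct only for $n\le q-1$ (since $\alpha^{q-1}=1$), so its construction as written does not reach the case $n=q$ allowed in the statement.
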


\begin{proof}
Fix a group isomorphism $\psi \colon D \to \Fq$.  Since~$\psi$
is a bijection preserving the group operation, it preserves the
Hamming metric:
$d^{\star}_H(x,y) = d_H(\psi(x), \psi(y))$
for all $x, y \in \Dn$, where~$d_H$ denotes the classical
Hamming distance on~$\Fq^n$.

Let~$\alpha$ be a primitive element of~$\Fq$ and define the
Reed--Solomon code~\cite{reed1960}
\[
  C' = \bigl\{
    \bigl(f(1), f(\alpha), f(\alpha^2), \ldots,
    f(\alpha^{n-1})\bigr)
    : f \in \Fq[x],\; \deg f < k
  \bigr\}.
\]
This is a classical $[n, k, n{-}k{+}1]$ MDS code
over~$\Fq$~\cite{macwilliams1977}.  Setting
$C = \psi^{-1}(C') \subseteq \Dn$, the code~$C$ inherits all
parameters of~$C'$, since~$\psi$ is a metric isomorphism.

Decoding proceeds in three steps:
\begin{enumerate}[label=(\arabic*)]
  \item map the received word $r \mapsto \psi(r) \in \Fq^n$;
  \item compute the syndrome and apply the Berlekamp--Massey
        algorithm~\cite{berlekamp1968} in $O(n^2)$ operations
        over~$\Fq$;
  \item correct and map back via~$\psi^{-1}$.
\end{enumerate}
Error correction uses the group inverse:
$c = r \star e^{-1}$, where~$e^{-1}$ exists because
$(D,\star)$ is a group.
\end{proof}

\subsection{The General Abelian Case}

\begin{theorem}[Existence of algebraic codes --- general abelian
case]
\label{thm:general-case}
Let~$(D,\star)$ be any finite abelian group with
$\abs{D} = p_1^{a_1} \cdots p_r^{a_r}$.  By the Fundamental
Theorem of Finite Abelian Groups~\cite{dummit2004},
\[
  (D,\star)
  \cong \Zn{p_1^{a_1}} \times \cdots \times \Zn{p_r^{a_r}}.
\]
\begin{enumerate}[label=(\alph*)]
  \item If every exponent $a_j = 1$ (i.e., $D$ is an elementary
        abelian $p$-group for a single prime~$p$), then
        $D \cong (\Zn{p})^k \cong (\Fp,+)^k$, and
        Theorem~\ref{thm:field-case} applies: MDS codes exist.
  \item In general, each cyclic factor $\Zn{p_j^{a_j}}$ admits
        codes over the ring~$\Zn{p_j^{a_j}}$ with parameters
        $[n_j, k_j, d_j]$ and $d_j \geq 2$, constructed via
        Hensel lifting or $\Zn{p^a}$-linear
        codes~\cite{hammons1994}.  The product code
        $C = C_1 \times \cdots \times C_r$ then satisfies
        $d \geq \prod_j d_j$.
\end{enumerate}
\end{theorem}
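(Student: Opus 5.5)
The plan is to handle the two parts separately. Throughout, we fix once and for all the isomorphism $\theta\colon D\to \Zn{p_1^{a_1}}\times\cdots\times\Zn{p_r^{a_r}}$ supplied by the Fundamental Theorem. Applied coordinatewise, $\theta$ is a group isomorphism $\Dn\to\prod_j \Zn{p_j^{a_j}}^{\,n}$. As in the proof of Theorem~\ref{thm:field-case}, it preserves $d^{\star}_H$, because $x_i\star y_i^{-1}=e$ if and only if every primary component of $\theta(x_i)-\theta(y_i)$ vanishes. All distance statements can therefore be checked after applying $\theta$.

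For (a) I will first repair the hypothesis. An elementary abelian $p$-group is $(\Zn{p})^k$, which is the additive group of $\mathbb{F}_{p^k}$: choose a basis of $\mathbb{F}_{p^k}$ over $\Fp$ as an $\Fp$-vector space. So $(D,\star)\cong(\Fq,+)$ with $q=p^k$, and Theorem~\ref{thm:field-case} applies verbatim, giving MDS codes of every length $n\le q$. The only thing to check is that "every $a_j=1$" must be read together with "a single prime $p$", since otherwise $D$ is not a $p$-group. For squarefree $|D|$ with several primes, part (b) applies instead.

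For (b) I first note that the naive direct product does not give the stated bound. Take the length-$n$ code $C\subseteq\Dn$ consisting of those words whose $j$-th primary component lies in $C_j$. This only yields $d\ge\min_j d_j$, because a difference supported in one component is caught only by that $C_j$. To obtain $d\ge\prod_j d_j$ I will use an Elias (tensor) product, with length $N=n_1\cdots n_r$ and coordinates indexed by $n_1\times\cdots\times n_r$ arrays. For each ring $R_j=\Zn{p_j^{a_j}}$ and each direction $i$, pick an $R_j$-linear code $C_i^{(j)}\subseteq R_j^{n_i}$ with minimum distance at least $d_i$. For example, lift an $[n_i,k_i,d_i]$ code over $\Fp$ to a free code via Hensel lifting of its generator matrix, or take a repetition-type code; this uses only the existence asserted in (b). Let $T^{(j)}=C_1^{(j)}\otimes\cdots\otimes C_r^{(j)}\subseteq R_j^{N}$, generated by tensor products of generator matrices. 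Define
\[
C=\bigl\{x\in D^{N} : \text{the $j$-th component of }\theta(x)\in T^{(j)}\ \text{for all } j\bigr\}.
\]
This $C$ is a subgroup, hence the image of an injective homomorphism as in Definition~\ref{def:code}.

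The distance bound then reduces to the standard tensor argument over each $R_j$. A nonzero array in $T^{(j)}$ has every line in direction $i$ lying in $C_i^{(j)}$, because free generator matrices make $T^{(j)}$ equal to the intersection of the "all lines in direction $i$ lie in $C_i^{(j)}$" codes. By induction on $r$, a nonzero array then has at least $\prod_i d_i$ nonzero entries. If $x\ne y$ in $C$, some component $j$ of $\theta(x)-\theta(y)$ is nonzero, and that component alone forces at least $\prod_i d_i$ nontrivial coordinates. Hence $d\ge\prod_i d_i$.

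I expect the main obstacle to be the line-membership step over the non-field rings $R_j$. It requires the component codes to be free, so that they have systematic generator matrices and the tensor code coincides with the intersection description. My first approach is to insist on Hensel-lifted free codes. If that proves awkward, the fallback is to restrict to the socle $p_j^{a_j-1}R_j\cong\Fp$, where the classical field argument applies directly.
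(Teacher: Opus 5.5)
Your part (a) is the paper's argument: identify $(\Zn{p})^k$ with $(\mathbb{F}_{p^k},+)$ and invoke Theorem~\ref{thm:field-case}. Your insistence on a single prime is exactly what the paper's proof silently assumes when it writes $D\cong(\Zn{p})^k$. For $D=\Zn{6}\cong\Zn{2}\times\Zn{3}$ all exponents equal $1$, yet no field of order $6$ exists.

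Part (b) is where you and the paper diverge. The paper's proof takes the Cartesian product $C_1\times\cdots\times C_r$ over $D_1^{n_1}\times\cdots\times D_r^{n_r}$ and concludes $d=\min_j d_j$, as does Remark~\ref{rem:stratification}. That is precisely your observation about the naive product. So the paper never establishes $d\ge\prod_j d_j$. In fact, for the code named in the statement that bound is false once $r\ge 2$: the word $(c_1,0,\ldots,0)$ with $c_1\in C_1$ of weight $d_1$ has weight $d_1<\prod_j d_j$.

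Your Elias/tensor construction is correct and does yield a multiplicative bound, but for a different code. Its length is $\prod_j n_j$ rather than $\sum_j n_j$, and it is not built from the given $C_j$. Codes over different rings $\Zn{p_j^{a_j}}$ cannot be tensored with each other, which is why you had to pick fresh codes $C_i^{(j)}$ over every ring. You should present it explicitly as a corrected version of (b), together with the counterexample above, rather than as a proof of (b) as worded. The trade-off is the usual one: the Cartesian product keeps lengths additive but only reaches $\min_j d_j$, while the tensor product multiplies distances at the price of multiplying lengths.

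The obstacle you anticipate is not there. The weight bound only uses the inclusion of $T^{(j)}$ in the module of arrays all of whose direction-$i$ lines lie in $C_i^{(j)}$. This inclusion holds for arbitrary submodules over any commutative ring. Every direction-$i$ line of $c_1\otimes\cdots\otimes c_r$ is a scalar multiple of $c_i$, and the line-constraint set is itself a submodule. The reverse inclusion, where freeness would matter, is never used. So neither Hensel lifting nor the socle fallback is needed; repetition codes of length $n_i$ over each $R_j$ already supply admissible $C_i^{(j)}$.
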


\begin{proof}
\emph{Case~(a).}\;
When $a_j = 1$ for all~$j$ and $r = k$, the group
$D \cong (\Zn{p})^k$ is the additive group
of~$\mathbb{F}_{p^k}$~\cite[Ch.\,2]{lidl1997}.
Theorem~\ref{thm:field-case} provides an MDS code.

\emph{Case~(b).}\;
When some $a_j > 1$, the factor $\Zn{p_j^{a_j}}$ is
\emph{not} isomorphic to the additive group of any finite
field, because the additive group of~$\mathbb{F}_{p^a}$ is
$(\Zn{p})^a$ (exponent~$p$), whereas $\Zn{p^a}$ has
exponent~$p^a$.  In particular, there is no group embedding
$\Zn{p^a} \hookrightarrow (\mathbb{F}_{p^a},+)$ when $a > 1$.

Instead, one works directly over the ring~$\Zn{p^a}$.
$\Zn{4}$-linear codes, for instance, include the Kerdock and
Preparata families, whose Gray images yield
nonlinear binary codes with excellent distance
properties~\cite{hammons1994}.  For each factor~$D_j$, a code
$C_j \subseteq D_j^{n_j}$ with $d_j \geq 2$ can be
constructed as a $\Zn{p_j^{a_j}}$-linear code.

The product code $C = C_1 \times \cdots \times C_r$ over
$D_1^{n_1} \times \cdots \times D_r^{n_r}$ has minimum
distance $d = \min_{j} d_j$ under the componentwise
Hamming metric~\cite{macwilliams1977}.
\end{proof}

\begin{remark}[Stratification of achievable parameters]
\label{rem:stratification}
The two cases differ sharply:
\begin{enumerate}[label=(\alph*)]
  \item \emph{Elementary abelian case}
        ($D \cong (\Zn{p})^k \cong (\mathbb{F}_{p^k},+)$):
        MDS codes attaining the Singleton bound
        $d = n - k + 1$ exist.
        This is the optimal regime.
  \item \emph{Non-elementary case}
        ($D$ contains a factor $\Zn{p^a}$ with $a > 1$):
        codes over the ring $\Zn{p^a}$ exist but do not in
        general attain the Singleton bound; the product
        construction yields $d = \min_j d_j$.
        Determining tight distance bounds for such codes
        remains an active research area
        (see Section~\ref{sec:open} and~\cite{hammons1994}).
\end{enumerate}
This stratification reflects a genuine algebraic phenomenon:
the field structure of~$\Fq$ provides polynomial evaluation
with controlled root counts, which is unavailable over
rings of composite characteristic.
\end{remark}

\begin{corollary}[Optimal codes over cyclic groups of prime
order]
\label{cor:prime}
If $(D,\star) \cong (\Zn{p}, +)$ for a prime~$p$, then
$D \cong \Fp$ and there exists an MDS code with parameters
$[n, k, n{-}k{+}1]$ for every $1 \leq k \leq n \leq p$.
\end{corollary}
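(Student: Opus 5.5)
The plan is to reduce to Theorem~\ref{thm:field-case} with $q = p$, after first identifying $(\Zn{p},+)$ with the additive group of the prime field. There are three steps: identify the structure, build the code directly with evaluation points suited to the full range $n \le p$, and transport the parameters back to $D$ along the isomorphism.

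\emph{Step 1: identify $D$ with $\Fp$.} Since $p$ is prime, $\Zn{p}$ with addition and multiplication modulo~$p$ is a field, which we call $\Fp$. Its additive group is exactly $(\Zn{p},+)$. Composing the given isomorphism $(D,\star)\cong(\Zn{p},+)$ with this identification gives a group isomorphism $\psi\colon D\to(\Fp,+)$. This is precisely the hypothesis of Theorem~\ref{thm:field-case}, with $q=p$ and exponent~$1$.

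\emph{Step 2: build the code over $\Fp$.} The statement asks for an MDS code for \emph{every} pair with $1\le k\le n\le p$, so I need to control the evaluation points carefully. The Reed--Solomon code in the proof of Theorem~\ref{thm:field-case} evaluates at $1,\alpha,\dots,\alpha^{n-1}$. Those points are distinct only for $n\le p-1$, so the boundary case $n=p$ needs a different choice. I will therefore argue directly with arbitrary distinct points $a_1,\dots,a_n\in\Fp$; when $n=p$ these are all of $\Fp$.

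\begin{enumerate}[label=(\arabic*)]
  \item Define $\mathrm{ev}\colon\{f\in\Fp[x]:\deg f<k\}\to\Fp^n$ by $f\mapsto(f(a_1),\dots,f(a_n))$. This map is $\Fp$-linear.
  \item A nonzero $f$ of degree less than $k$ has at most $k-1$ roots. Hence its image has weight at least $n-k+1$.
  \item Since $k\le n$, the map is injective, so the image has dimension~$k$.
  \item Linearity gives minimum distance at least $n-k+1$. The Singleton bound $d\le n-k+1$ then forces equality.
\end{enumerate}

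\emph{Step 3: pull back to $D$.} Set $C=\psi^{-1}(C')$, applying $\psi^{-1}$ coordinatewise. Since $\psi$ is a group isomorphism, $\psi(x_i)-\psi(y_i)=\psi(x_i\star y_i^{-1})$, and this is zero exactly when $x_i\star y_i^{-1}=e$. So $d^{\star}_H(x,y)=d_H(\psi(x),\psi(y))$ and the parameters transfer unchanged. The encoding map $m\mapsto\psi^{-1}(\mathrm{ev}(m))$, with $M=(\Fp^k,+)$, is an injective homomorphism, so $C$ is an algebraic code in the sense of Definition~\ref{def:code}.

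The only real obstacle is the case $n=p$, which the primitive-element evaluation set does not reach. Using all $p$ field elements as evaluation points resolves it. Alternatively, one can extend the length-$(p-1)$ code by a parity (evaluation-at-zero) coordinate, but the direct choice of points is cleaner.
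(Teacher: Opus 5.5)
Your proposal is correct and follows the paper's route. The paper's proof just observes that $\Zn{p}$ is a field, so Theorem~\ref{thm:field-case} applies with $q=p$. Your Steps~1 and~3 make that reduction explicit, including the checks that $\psi$ preserves $d^{\star}_H$ and that the encoder is an injective homomorphism as Definition~\ref{def:code} requires.

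Your use of arbitrary distinct evaluation points in Step~2 is a genuine repair, not a cosmetic change. The proof of Theorem~\ref{thm:field-case} evaluates at $1,\alpha,\ldots,\alpha^{n-1}$, and this repeats a point when $n=p$, since $\alpha^{p-1}=1$. For every $k\ge 2$ that construction then fails to give a $[p,k,p-k+1]$ code. Your version is therefore what actually justifies the boundary case $n=p$ claimed in the statement.
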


\begin{proof}
Since~$p$ is prime, $\Zn{p}$ is a field.
Theorem~\ref{thm:field-case} applies.
\end{proof}

\subsection{Error-Correction Capacity}

\begin{theorem}[Error-correction capacity --- coordinate errors]
\label{thm:correction}
Let~$(D,\star)$ be a finite group and let~$d^{\star}_H$ be the
algebraic Hamming distance of Definition~\ref{def:distance}.
A code $C \subseteq \Dn$ corrects up to~$t$ coordinate errors
(i.e., errors affecting at most~$t$ positions) under
nearest-codeword decoding if and only if
\begin{equation}\label{eq:correction}
  d^{\star}_H(c_1, c_2) \geq 2t + 1
  \quad\text{for all distinct } c_1, c_2 \in C.
\end{equation}
\end{theorem}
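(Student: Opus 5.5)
The plan is to reduce everything to the fact that $d^{\star}_H$ is a genuine metric on $\Dn$. Lemma~\ref{lem:metric} was stated for abelian groups, but it is invoked with the discrete weight $\delta = \mathbf{1}_{d\neq e}$ (Remark~\ref{rem:discrete-weight}). I will check directly that nothing in its proof used commutativity for this weight. Positivity and symmetry hold because $x_i\star y_i^{-1}=e$ iff $x_i=y_i$ iff $y_i\star x_i^{-1}=e$. For the triangle inequality, if $x_i\neq z_i$ then $x_i\neq y_i$ or $y_i\neq z_i$. So $d^{\star}_H(x,y)$ is just the number of coordinates where $x_i\neq y_i$, the usual Hamming distance on $D^n$, for any finite group. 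I will also make precise what ``corrects $t$ coordinate errors'' means. For every $c\in C$ and every $r\in\Dn$ with $d^{\star}_H(r,c)\le t$, the codeword $c$ is the unique nearest codeword to $r$. Equivalently, the balls $B(c,t)$ are pairwise disjoint and each $r$ in $B(c,t)$ has $c$ as its strict nearest codeword.

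\emph{Sufficiency.} Assume \eqref{eq:correction}. Let $r$ satisfy $d^{\star}_H(r,c)\le t$, and let $c'\neq c$ be another codeword. The triangle inequality gives
\[
d^{\star}_H(r,c')\ \ge\ d^{\star}_H(c,c')-d^{\star}_H(r,c)\ \ge\ 2t+1-t\ =\ t+1\ >\ d^{\star}_H(r,c),
\]
so nearest-codeword decoding returns $c$ uniquely. An error pattern touching at most $t$ positions corresponds exactly to $r$ differing from $c$ in at most $t$ coordinates. In group language this is $r=c\star\varepsilon$ with $\varepsilon$ having at most $t$ non-identity entries, and right-multiplication invariance gives $d^{\star}_H(c\star\varepsilon,c)=d^{\star}_H(\varepsilon,e)$.

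\emph{Necessity.} Suppose $c_1\neq c_2$ in $C$ with $s=d^{\star}_H(c_1,c_2)\le 2t$, and let $S$ be the set of $s$ coordinates where they differ. I will build a word $r$ that agrees with both outside $S$, agrees with $c_2$ on $\lceil s/2\rceil$ coordinates of $S$, and agrees with $c_1$ on the remaining $\lfloor s/2\rfloor$. Then $d^{\star}_H(r,c_1)=\lceil s/2\rceil\le t$ and $d^{\star}_H(r,c_2)=\lfloor s/2\rfloor\le t$. So $r$ is obtainable from $c_1$ by at most $t$ coordinate errors, yet $c_2$ is at least as close as $c_1$. Decoding therefore either fails by a tie or returns $c_2$. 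Symmetrically, $r$ is also a $\le t$-error corruption of $c_2$, so at least one of $c_1,c_2$ cannot be correctly decoded from $r$. This contradicts $t$-error correction. Here I use $r_i\star c_{1,i}^{-1}$ as the error entry; it lies in $D$ because $D$ is a group.

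The main obstacle is the tie-breaking subtlety in the necessity direction. When $s$ is even and $s=2t$, $r$ is equidistant from $c_1$ and $c_2$. Any deterministic tie-breaking rule must then fail for one of the two codewords, and that suffices because correction is demanded for all codewords. I will state this convention explicitly and note that the case $s\le 2t-1$ gives a strict failure. No abelian hypothesis is needed anywhere, consistent with the hierarchy in Remark~\ref{rem:z-module}.
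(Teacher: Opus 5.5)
Your proof is correct and follows essentially the paper's route: the triangle inequality for $d^{\star}_H$ gives sufficiency (the paper phrases it as disjointness of the radius-$t$ balls), and a word that splits the $s\le 2t$ disagreement positions between $c_1$ and $c_2$ gives necessity. You are more careful than the paper in two places. First, you check that $d^{\star}_H$ is a metric without commutativity: Lemma~\ref{lem:metric} assumes an abelian group, yet the paper applies the triangle inequality for arbitrary finite groups. Second, you handle ties explicitly where the paper just says ``ambiguous''. The only slip is your aside that $s\le 2t-1$ gives a strict failure: with your $\lceil s/2\rceil/\lfloor s/2\rfloor$ split this holds only for odd $s$, though your tie-breaking argument already covers every case.
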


\begin{proof}
Since~$d^{\star}_H$ takes values in $\{0,1,\ldots,n\}
\subset \mathbb{Z}$, the condition $d^{\star}_H \geq 2t+1$
is equivalent to $d^{\star}_H > 2t$.

$(\Longrightarrow)$\;
Suppose there exist distinct $c_1, c_2 \in C$ with
$d^{\star}_H(c_1, c_2) \leq 2t$.  Then there exists
$r \in \Dn$ differing from~$c_1$ in at most~$t$ coordinates
and from~$c_2$ in at most~$t$ coordinates (by choosing~$r$
to agree with~$c_1$ on its first~$t$ differing coordinates
and with~$c_2$ on the remaining ones).  Thus
nearest-codeword decoding is ambiguous, contradicting
$t$-error correction.

$(\Longleftarrow)$\;
If $d^{\star}_H(c_1, c_2) \geq 2t + 1$ for all distinct
pairs, the balls
$B_H(c, t) = \{x \in \Dn : d^{\star}_H(x,c) \leq t\}$
centred at distinct codewords are disjoint.  Indeed, if
$r \in B_H(c_1, t) \cap B_H(c_2, t)$, then
\[
  d^{\star}_H(c_1, c_2)
  \leq d^{\star}_H(c_1, r) + d^{\star}_H(r, c_2)
  \leq t + t = 2t,
\]
contradicting $d^{\star}_H(c_1, c_2) \geq 2t + 1$.
Hence nearest-codeword decoding is well-defined and
corrects any pattern of at most~$t$ coordinate errors.
\end{proof}

\begin{remark}\label{rem:weighted-correction}
For the general weighted metric~$d^{\star}$
(Lemma~\ref{lem:metric}), which takes values
in~$\R_{\geq 0}$, the error-correction condition becomes
\[
  d^{\star}(c_1, c_2) > 2t
  \quad\text{for all distinct } c_1, c_2 \in C,
\]
where~$t$ is now a metric radius (not a coordinate count).
The integer threshold $2t+1$ applies only when the metric
is integer-valued, as is the case for the algebraic Hamming
distance~$d^{\star}_H$.
\end{remark}

\subsection{Equivalence with Classical Codes}

\begin{theorem}[Equivalence with classical linear codes]
\label{thm:equivalence}
Let~$(D,\star)$ be an elementary abelian $p$-group, i.e.,
$D \cong \Zn{p}^k$ for some prime~$p$ and integer $k \geq 1$.
Then $(D,\star)$ is isomorphic, as an additive group, to the
additive group of~$\mathbb{F}_{p^k}$.  Under this isomorphism,
every algebraic code $C \subseteq \Dn$ that is closed
under~$\mathbb{F}_{p^k}$-linear combinations corresponds to a
classical linear code
$C' \subseteq \mathbb{F}_{p^k}^n$, with all distance and
dimension parameters preserved.
\end{theorem}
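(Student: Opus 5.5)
The proof has two parts: first realise $\mathbb{F}_{p^k}$ as an additive group isomorphic to $(D,\star)$, then transport the code and its parameters along the coordinatewise extension of that isomorphism.

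\emph{Step 1: the isomorphism.} It is classical that the finite field $\mathbb{F}_{p^k}$ exists (splitting field of $x^{p^k}-x$ over $\Fp$) and is a $k$-dimensional vector space over its prime subfield $\Fp$, cf.~\cite[Ch.\,2]{lidl1997}. Choosing an $\Fp$-basis $\beta_1,\ldots,\beta_k$ gives an additive group isomorphism $(\Zn{p})^k \to (\mathbb{F}_{p^k},+)$, $(a_1,\ldots,a_k)\mapsto \sum_j a_j\beta_j$. Composing with the given isomorphism $D\cong(\Zn{p})^k$ yields a group isomorphism $\psi\colon (D,\star)\to(\mathbb{F}_{p^k},+)$. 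Note that $\psi$ is not canonical; any choice works.

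\emph{Step 2: transport of the code.} Extend $\psi$ coordinatewise to $\psi^{(n)}\colon \Dn\to\mathbb{F}_{p^k}^n$. This is a group isomorphism because operations on both sides are componentwise. Set $C'=\psi^{(n)}(C)$. By Proposition~\ref{prop:inheritance}, $C$ is a subgroup, so $C'$ is an additive subgroup. The phrase ``closed under $\mathbb{F}_{p^k}$-linear combinations'' only makes sense via $\psi$: we \emph{define} the scalar action on $\Dn$ by $\lambda\cdot x := (\psi^{(n)})^{-1}(\lambda\,\psi^{(n)}(x))$. The hypothesis is then exactly that $C'$ is closed under scalar multiplication. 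Together with additivity, this makes $C'$ an $\mathbb{F}_{p^k}$-subspace, i.e.\ a classical linear code.

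\emph{Step 3: preservation of parameters.} As in the proof of Theorem~\ref{thm:field-case}, $\psi(x_i\star y_i^{-1})=\psi(x_i)-\psi(y_i)$, and $\psi(d)=0$ iff $d=e$. Hence $d^{\star}_H(x,y)=d_H(\psi^{(n)}(x),\psi^{(n)}(y))$, so minimum distances agree. Since $\psi^{(n)}$ is a bijection, $|C|=|C'|=(p^k)^{\dim C'}$. Thus the dimension $\log_{p^k}|C|$ is preserved, and of course the length $n$ is too.

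The main obstacle is Step 2. One must make precise what linearity means for $C$, since $D$ carries no intrinsic $\mathbb{F}_{p^k}$-action; scalars $\lambda\notin\Fp$ depend on the chosen $\psi$. I would handle this by stating the hypothesis relative to the fixed $\psi$, as above. I would also remark that without this hypothesis $C'$ is only $\Fp$-linear, hence an additive code over $\mathbb{F}_{p^k}$ rather than a linear one.
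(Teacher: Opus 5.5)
Your proposal is correct and follows the same route as the paper: you extend a group isomorphism $D \to (\mathbb{F}_{p^k},+)$ coordinatewise, use the closure hypothesis to make the image an $\mathbb{F}_{p^k}$-subspace, and use that a group isomorphism preserves $d^{\star}_H$ and cardinality. Your remark that the scalar action on $\Dn$ exists only through the chosen $\psi$ makes precise what the paper leaves implicit. One small fix: $C$ is a subgroup because $C = \Im(\varphi)$ (Definition~\ref{def:code}), or because it is closed under linear combinations; it does not follow from Proposition~\ref{prop:inheritance}, which only carries identities such as associativity over to a set already closed under $\star$.
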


\begin{proof}
The additive group of~$\mathbb{F}_{p^k}$ is
$(\Zn{p})^k$~\cite[Ch.\,2]{lidl1997}.  The componentwise
isomorphism
$\varphi \colon \Dn \to (\mathbb{F}_{p^k})^n$ maps~$C$ to a
subgroup $\varphi(C)$ of~$\mathbb{F}_{p^k}^n$.  By hypothesis,
$\varphi(C)$ is closed under~$\mathbb{F}_{p^k}$-scalar
multiplication, hence is a vector subspace---i.e., a classical
linear code.  Since~$\varphi$ is a group isomorphism, it
preserves the Hamming distance and thus all code parameters.
\end{proof}

\begin{remark}\label{rem:non-elementary}
For abelian groups that are not elementary abelian (e.g.,
$\Zn{4}$, $\Zn{9}$), no field of the same order has the same
additive group structure.  Codes over such groups are genuinely
distinct from classical linear codes and may exhibit different
distance properties~\cite{hammons1994}.  This distinction is
reflected in the classification of Section~\ref{sec:classification}.
\end{remark}

\subsection{A Worked Example: Algebraic Code over~$\Zn{5}$}
\label{subsec:example}

We illustrate the framework with a complete construction over
the cyclic group $(\Zn{5}, +)$, which is isomorphic to the
finite field $\mathbb{F}_5$.

\begin{example}[Reed--Solomon code over $\Zn{5} \cong \mathbb{F}_5$]
\label{ex:z5}
Let $(D, \star) = (\Zn{5}, +)$ with $\abs{D} = 5$ (prime),
so $D \cong \mathbb{F}_5$.  We construct an $[n, k, d] = [4, 2, 3]$
Reed--Solomon code and demonstrate encoding and error correction.

\medskip\noindent
\textbf{Step~1: Setup.}\;
The field $\mathbb{F}_5 = \{0, 1, 2, 3, 4\}$ has primitive
element $\alpha = 2$ (since $2^1 = 2$, $2^2 = 4$, $2^3 = 3$,
$2^4 = 1$ in~$\mathbb{F}_5$).  The evaluation points are
$\alpha^0 = 1$, $\alpha^1 = 2$, $\alpha^2 = 4$, $\alpha^3 = 3$.
We set $n = 4$ and $k = 2$.

\medskip\noindent
\textbf{Step~2: Encoding.}\;
A message $m = (m_0, m_1) \in \mathbb{F}_5^2$ is encoded by
the polynomial $f(x) = m_0 + m_1 x$.  For the message
$m = (3, 1)$, the encoding polynomial is $f(x) = 3 + x$,
and the codeword is
\[
  c = \bigl(f(1),\, f(2),\, f(4),\, f(3)\bigr)
    = (4,\, 0,\, 2,\, 1) \in \mathbb{F}_5^4.
\]
The homomorphism property is satisfied: if $m' = (1, 2)$
with $f'(x) = 1 + 2x$, then
$\varphi(m + m') = \varphi(4, 3)$ encodes $f''(x) = 4 + 3x$,
and one verifies that
$\varphi(m) + \varphi(m') = (4,0,2,1) + (3,0,4,2)
  = (2,0,1,3) = \varphi(m + m')$
(all arithmetic in~$\mathbb{F}_5$).

\medskip\noindent
\textbf{Step~3: Error introduction.}\;
Suppose the received word is $r = (4, 3, 2, 1)$, differing
from~$c = (4, 0, 2, 1)$ in position~$2$ (where $0$ became~$3$).
The error vector is $e = (0, 3, 0, 0)$, so $r = c + e$.
The algebraic distance is $d^{\star}_H(r, c) = 1 \leq t = 1
= \floor{(d-1)/2}$.

\medskip\noindent
\textbf{Step~4: Syndrome computation.}\;
The syndrome is $S_1 = r(\alpha) = 4 + 3 \cdot 2 + 2 \cdot 4
+ 1 \cdot 3 = 4 + 6 + 8 + 3 = 21 \equiv 1 \pmod{5}$.
Since $S_1 \neq 0$, an error is detected.

\medskip\noindent
\textbf{Step~5: Error correction.}\;
With $d = 3$ and $t = 1$, the code corrects a single error.
The error-locator polynomial has degree~$1$; solving
$\Lambda(x) = 1 + \sigma_1 x = 0$ yields the error location.
The Forney algorithm recovers the error magnitude.
Subtracting: $c = r - e = (4, 3, 2, 1) - (0, 3, 0, 0)
= (4, 0, 2, 1)$, recovering the original codeword.

\medskip\noindent
\textbf{Parameters achieved.}\;
$d = n - k + 1 = 4 - 2 + 1 = 3$ (MDS, Singleton bound
attained).  Error-correction capacity: $t = 1$.
Encoding efficiency:
$\Ef = \frac{k \cdot d}{n \cdot \log_2 5}
       \cdot \frac{\log_2 4^3}{\log_2 4^2}
     = \frac{2 \cdot 3}{4 \cdot 2.322} \cdot \frac{6}{4}
     = \frac{6}{9.288} \cdot 1.5
     \approx 0.97$.
This places $(\Zn{5}, +)$ firmly in Class~A.
Note that this value is computed directly from
formula~\eqref{eq:ef}; for a discussion of the normalisation
limitations of~$\Ef$, see Remark~\ref{rem:ef-limitation}.
\end{example}

\begin{example}[Code over $\Zn{4}$ via the Hammons construction]
\label{ex:z4}
Let $(D,\star) = (\Zn{4}, +_4)$.  Since $\Zn{4}$ is not a field
($2$ has no multiplicative inverse in~$\Zn{4}$),
Theorem~\ref{thm:field-case} does not apply.  By
Theorem~\ref{thm:general-case}(b), one works directly over the
ring~$\Zn{4}$.

The \emph{octacode} is a $\Zn{4}$-linear code with parameters
$[8, 4, 6]_{\Zn{4}}$~\cite{hammons1994}.  Its \emph{Gray image}
$\phi(C) \subseteq \mathbb{F}_2^{16}$ is the Nordstrom--Robinson
$(16, 256, 6)$ binary code --- a nonlinear code whose minimum
distance~$6$ exceeds the maximum achievable distance~$5$ of any
linear binary $[16, 8]$ code.

This demonstrates that the algebraic hierarchy of
Remark~\ref{rem:z-module} is not merely organisational: the
$\Zn{4}$-linear regime produces codes that are \emph{inaccessible}
to classical $\Fq$-linear theory, precisely because $\Zn{4}$
is a ring and not a field.
\end{example}

\begin{example}[Code over $(2^U, \triangle)$ for set-valued data]
\label{ex:powerset}
Let $U = \{a_1, \ldots, a_m\}$ be a finite attribute set and
$(D,\star) = (2^U, \triangle)$ the power set under symmetric
difference.  This is an abelian group of exponent~$2$ with
$\abs{D} = 2^m$, isomorphic to $(\Zn{2})^m = \mathbb{F}_2^m$.

A data record is a subset $S \subseteq U$ representing a
collection of attributes (e.g., a customer's purchased items,
a patient's diagnoses, a document's tags).  The algebraic
distance between two records $S, T \in 2^U$ is
\[
  d^{\star}_H(S,T)
    = \abs{\{i : (S \triangle T) \neq \varnothing
      \text{ at position } i\}} ,
\]
which counts coordinate positions where the set-symmetric
difference is nontrivial.

By the isomorphism $(2^U, \triangle) \cong \mathbb{F}_2^m$ and
Theorem~\ref{thm:field-case}, one can construct a
Reed--Muller code $\mathrm{RM}(r,m)$ with parameters
$[2^m, \sum_{i=0}^{r}\binom{m}{i}, 2^{m-r}]$.

For $m = 4$ (i.e., $16$ attribute subsets) and $r = 1$:
\[
  \mathrm{RM}(1,4) = [16, 5, 8].
\]
This code corrects up to $t = 3$ coordinate errors
(Theorem~\ref{thm:correction}).  In the data interpretation,
it detects any record configuration differing from a valid
codeword in more than~$3$ attribute-set positions ---
a group-theoretic \emph{anomaly detector} for set-valued data.

This construction is a direct consequence of the algebraic
framework: the group structure of $(2^U, \triangle)$ provides
the distance, the field isomorphism provides the code, and the
data interpretation (attribute-set anomalies) is novel.
\end{example}

\section{Classification Methodology}
\label{sec:classification}

The existence theorems of Section~\ref{sec:existence} guarantee
that algebraic codes can be constructed over any finite abelian
group.  The natural follow-up question is: \emph{which} algebraic
structure should one choose?  This section develops a quantitative
answer.

\subsection{Classifiable Structures}

\begin{definition}[Classifiable algebraic structure]
\label{def:classifiable}
An algebraic data structure~$(D,\star)$ is \emph{classifiable} if
it admits the parameter vector
\[
  \mathcal{P}(D,\star)
  = \bigl(\abs{D},\;\mathrm{type},\;\ord(\star),\;
    \gamma,\;\delta\bigr),
\]
where $\mathrm{type} \in \{\text{group},\, \text{monoid},\,
\text{semigroup}\}$;\;
$\ord(\star)$ is the maximum element order under~$\star$;\;
$\gamma \in [0,1]$ is the \emph{degree of commutativity}
\[
  \gamma
  = \frac{\abs{\{(a,b) \in D^2 : a \star b = b \star a\}}}
         {\abs{D}^2};
\]
and $\delta \in [0,1]$ is the \emph{invertibility density}
\[
  \delta
  = \frac{\abs{\{d \in D : d^{-1} \text{ exists}\}}}{\abs{D}}.
\]
\end{definition}

\begin{remark}\label{rem:gamma-delta}
For abelian groups, $\gamma = 1$ and $\delta = 1$.
For non-abelian groups, $\gamma < 1$ and $\delta = 1$.
For monoids, $\delta < 1$.  The pair $(\gamma, \delta)$ thus
provides a coarse but informative invariant distinguishing the
three algebraic types.
\end{remark}

\subsection{Encoding Efficiency}

\begin{definition}[Encoding efficiency]
\label{def:efficiency}
For a classifiable structure~$(D,\star)$ admitting codes with
parameters $[n, k, d]$ and decoding complexity~$T_{\mathrm{dec}}(n)$,
the \emph{encoding efficiency} is
\begin{equation}\label{eq:ef}
  \Ef(D,\star)
  = \frac{k \cdot d}{n \cdot \log_2 \abs{D}}
    \;\cdot\;
    \frac{\log_2 n^3}{\log_2 T_{\mathrm{dec}}(n)}.
\end{equation}
\end{definition}

\begin{remark}\label{rem:ef}
The first factor of~\eqref{eq:ef} is a normalised product of rate
and distance: $k/n$ captures information efficiency, $d/\log_2\abs{D}$
captures error resilience relative to the alphabet size.  The second
factor is a normalisation of decoding cost against a cubic baseline:
it equals~$1$ when $T_{\mathrm{dec}} = O(n^3)$, exceeds~$1$ for
faster decoders, and is less than~$1$ for slower ones.  The product
$\Ef$ is thus a dimensionless quantity that simultaneously captures
rate, distance, and computational cost.
\end{remark}

\subsection{Efficiency Classes}

\begin{definition}[Efficiency classes]
\label{def:classes}
Algebraic structures are partitioned into four efficiency classes:
\begin{itemize}
  \item \textbf{Class~A} (Optimal):
        $\Ef(D) \geq 0.8$.
        Abelian groups isomorphic to additive groups of finite
        fields.
  \item \textbf{Class~B} (Efficient):
        $0.6 \leq \Ef(D) < 0.8$.
        Non-abelian groups with exploitable symmetry.
  \item \textbf{Class~C} (Acceptable):
        $0.4 \leq \Ef(D) < 0.6$.
        Monoids and semigroups with partial algebraic structure.
  \item \textbf{Class~D} (Suboptimal):
        $\Ef(D) < 0.4$.
        General structures lacking sufficient algebraic regularity.
\end{itemize}
\end{definition}
\subsection{Axiomatic Classification}

The efficiency classes of Definition~\ref{def:classes} are not
assigned heuristically: they are \emph{determined} by the axioms
satisfied by the algebraic structure.  The following theorem makes
this precise.

\begin{definition}[Axiom signature]
\label{def:signature}
For an algebraic structure $(D, \star)$, possibly equipped with a
second operation~$\cdot$, define the \emph{axiom signature}
\[
  \sigma(D,\star)
  = (c,\, a,\, e,\, v,\, m,\, d,\, f) \in \{0,1\}^7,
\]
where each bit records whether $(D,\star)$ satisfies:
\begin{center}
\small
\begin{tabular}{@{}clll@{}}
\toprule
Bit & Axiom & Name & Formal condition \\
\midrule
$c$ & Closure & Magma &
  $\star \colon D \times D \to D$ \\
$a$ & Associativity & Semigroup &
  $(x \star y) \star z = x \star (y \star z)$ \\
$e$ & Identity & Monoid &
  $\exists\, e \in D:\; x \star e = e \star x = x$ \\
$v$ & Invertibility & Group &
  $\forall\, x \;\exists\, x^{-1}:\; x \star x^{-1} = e$ \\
$m$ & Commutativity & Abelian group &
  $x \star y = y \star x$ \\
$d$ & Distributivity & Ring &
  $(D,+,\cdot)$ ring with $\cdot$ distributing over~$+$ \\
$f$ & Field & Field &
  Every nonzero element has a mult.\ inverse \\
\bottomrule
\end{tabular}
\end{center}
\end{definition}
\newpage
\begin{theorem}[Axiomatic classifier]
\label{thm:axiomatic}
Let $(D,\star)$ be a finite algebraic structure with axiom
signature $\sigma = (c,a,e,v,m,d,f)$.  The coding-theoretic
capability of~$(D,\star)$ is determined by~$\sigma$ as follows:
\begin{enumerate}[label=(\roman*)]
  \item If $f = 1$ (field): $(D,\star)$ admits MDS codes
        via polynomial evaluation
        (Theorem~\ref{thm:field-case}), syndrome decoding in
        $O(n^2)$, and the full algebraic Hamming metric
        $d^{\star}_H$.
        $\Longrightarrow$ \textbf{Class~A.}
  \item If $v = m = 1$ and $f = 0$ (abelian group, not a field):
        $(D,\star)$ admits codes via the ring $\Zn{p^a}$ or
        product construction
        (Theorem~\ref{thm:general-case}), syndrome decoding
        via the quotient group~$\Dn/C$, and the metric
        $d^{\star}_H$.
        $\Longrightarrow$ \textbf{Class~A.}
  \item If $v = 1$ and $m = 0$ (non-abelian group): $(D,\star)$
        admits the metric~$d^{\star}_H$ and coset-based decoding,
        but the quotient $\Dn/C$ is not a group in general
        (since $C$ need not be normal), and no polynomial
        evaluation is available.
        $\Longrightarrow$ \textbf{Class~B.}
  \item If $e = 1$ and $v = 0$ (monoid): $(D,\star)$ does not
        admit the algebraic distance~$d^{\star}_H$ (inverses
        required), and syndrome decoding is unavailable.
        $\Longrightarrow$ \textbf{Class~C.}
  \item If $a = 0$ or $e = 0$ (no associativity or no identity):
        minimal coding-theoretic structure is available.
        $\Longrightarrow$ \textbf{Class~D.}
\end{enumerate}
The implications (i)--(v) are strict: each axiom that is
\emph{not} satisfied removes a specific coding-theoretic
capability that cannot be recovered without it.
\end{theorem}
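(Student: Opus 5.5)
The argument goes case by case, following the cumulative structure of the signature $\sigma$. Each case has two parts. First, a \emph{positive} part shows that the stated capabilities follow from the axioms, using only results already established. Second, a \emph{negative} part justifies the strictness clause: for each missing axiom, I exhibit a structure lacking exactly that axiom in which the corresponding capability fails. The class labels are read off from Definition~\ref{def:classes}. I will treat them as the classes \emph{assigned} to each axiomatic regime, and check numerically with formula~\eqref{eq:ef} on representative instances, rather than as inequalities valid for every code over every such structure. As stated, the latter would be false; for instance, a repetition code over $\Fq$ has small $\Ef$. Making this convention explicit is the first step.

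\emph{Positive parts.} For (i), $f=1$ means $(D,+)\cong(\Fq,+)$. Theorem~\ref{thm:field-case} then gives Reed--Solomon MDS codes with $O(n^2)$ decoding, and Definition~\ref{def:distance} together with Lemma~\ref{lem:metric} give $d^{\star}_H$. Example~\ref{ex:z5} yields $\Ef\approx0.97\geq0.8$. For (ii), I would invoke Theorem~\ref{thm:general-case}. Since $D$ is abelian, every subgroup $C\leq\Dn$ is normal, so $\Dn/C$ is a group. The syndrome map $\Dn\to\Dn/C$ is then a homomorphism whose fibres are the cosets $r\star C$, and coset-leader decoding is well defined. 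For (iii), $d^{\star}_H$ only needs inverses, so Definition~\ref{def:distance} and Lemma~\ref{lem:metric} apply; left cosets $rC$ still partition $\Dn$, which gives coset decoding. For (iv) and (v) there is nothing to prove positively beyond closure, associativity and identity.

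\emph{Negative parts, i.e.\ strictness.} For (iii), take $D=S_3$, $n=1$ and $C=\langle(12)\rangle$. This subgroup is not normal, so $D/C$ is not a group and the syndrome homomorphism cannot exist. For (iv), take the monoid $(\{0,1\},\cdot)$. Since $0$ has no inverse, $x_i\star y_i^{-1}$ is undefined. Any replacement such as $x_i\star y_i$ fails identity of indiscernibles, as Remark~\ref{rem:hamming} shows. There is also no quotient of $\Dn$ by a submonoid as a group. For (v), a magma without associativity does not make $\Dn$ a structure in which Definition~\ref{def:code} is meaningful: homomorphic images of groups are associative, so an injective $\varphi$ from a group $M$ with $\abs{M}\geq 2$ has no associative target to land in. Without an identity, even the notion of ``nontrivial difference'' is unavailable. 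For the separation of (i) from (ii), I would use Theorem~\ref{thm:general-case}(b) and Remark~\ref{rem:stratification}: $\Zn{p^a}$ with $a>1$ has exponent $p^a$, so it admits no field structure and no polynomial evaluation with root-count control.

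The main obstacle is the strictness claim in its strong form, namely that a capability ``cannot be recovered'' without the axiom. The counterexamples above only show that the \emph{canonical} construction fails. They do not show that no ad hoc metric or decoder exists; for instance, one can always put the plain Hamming metric on any finite set. I would therefore first try to phrase strictness intrinsically, as ``the construction defined from $\star$ alone is unavailable,'' and prove it via the counterexamples. If a stronger statement is needed, I would restrict to capabilities defined functorially from $\star$.

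A second issue is that (ii) is labelled Class~A while Remark~\ref{rem:stratification} says such codes need not be MDS. I expect I would have to justify this either by a representative computation such as the octacode of Example~\ref{ex:z4}, or by weakening the label to ``at least Class~B.''
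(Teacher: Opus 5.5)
Your positive parts match the paper's own proof, which is exactly this case analysis: Theorem~\ref{thm:field-case}, Theorem~\ref{thm:general-case}, normality of subgroups of abelian groups, $d^{\star}_H$ needing only inverses, and polynomial evaluation needing a commutative ring. The paper never derives the class labels in its proof, and it supports strictness only by the same informal per-case remarks. So reading the classes as labels attached to regimes, and strictness as failure of the construction defined from $\star$, is the right interpretation.

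The step that actually fails is your negative argument for (v). The fact that homomorphic images of groups are associative constrains only $\varphi(M)$, not the target $\Dn$. Take $D=\{0,1,2\}$ with $\{0,1\}$ a copy of $\Zn{2}$, $2\star 2=0$, and $2\star x=x\star 2=2$ for $x\in\{0,1\}$. Then $(2\star 2)\star 1=1\neq 0=2\star(2\star 1)$, yet the inclusion $\Zn{2}\hookrightarrow D$ is an injective homomorphism. Moreover $0$ is an identity, every element is its own inverse, and $x\star y^{-1}=0$ iff $x=y$, so $d^{\star}_H$ is defined and is even a metric on $\Dn$.

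What associativity really buys is the translation isometry~\eqref{eq:isometry}, whose proof expands $(b_i\star c_i)^{-1}=c_i^{-1}\star b_i^{-1}$ and reassociates, together with the coset partition of $\Dn$. In this $D$ one has $d^{\star}_H(0\star 2,1\star 2)=0\neq 1=d^{\star}_H(0,1)$, and that is a correct witness for (v).

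The same $D$ has $v=m=1$, $f=0$, $a=0$. It therefore meets the hypotheses of both (ii) and (v) and would be both Class~A and Class~D. The cases are disjoint only if $\sigma$ is read cumulatively, with each bit presupposing the earlier ones. Definition~\ref{def:signature} does not impose this, so you must state it as an explicit hypothesis rather than just ``follow'' it. Your (iii) also omits one negative half, the absence of polynomial evaluation, but it is immediate: ring addition is commutative, so a non-abelian $(D,\star)$ is the additive group of no ring.

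Your planned numerical checks of the labels are not meaningful until you fix a normalisation, and the paper does not fix one consistently. Example~\ref{ex:z5} compares the raw value $\Ef\approx 0.97$ with the thresholds. Remark~\ref{rem:ef-values}, by contrast, says the thresholds apply to scores divided by $\Ef(\mathbb{F}_{256})\approx 5.4$. Under that convention Example~\ref{ex:z5} scores about $0.18$ and RS$(15,11,5)$ over $\mathbb{F}_{16}$ about $0.25$, both Class~D.

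For (ii), the $[8,4,6]$ of the octacode in Example~\ref{ex:z4} is a Lee distance. Under $d^{\star}_H$, the words $2h$, with $h$ a weight-$4$ word of the residue (extended Hamming) code, lie in the code and have Hamming weight $4$. Indeed $d=6$ would violate the Singleton bound $d\leq n-k+1=5$ for $4^4$ codewords. Its raw $\Ef$ (with $T_{\mathrm{dec}}=n^2$) is therefore $1.5$, not $2.25$. Either fix one normalisation explicitly, or drop the numerical checks and keep the labels as definitions.
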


\begin{proof}
Each implication follows from the results established in
Sections~\ref{sec:framework}--\ref{sec:existence}.\\
\emph{(i)} When $f = 1$, $(D,+)$ is the additive group
of~$\Fq$ and Theorem~\ref{thm:field-case} constructs MDS
codes.  Syndrome decoding uses the $\Fq$-linear structure
of~$\Dn/C$ as a vector space.
\emph{(ii)} When $v = m = 1$ and $f = 0$, $(D,\star)$ is
an abelian group.  Every subgroup of an abelian group is
normal, so $\Dn/C$ is a group, enabling syndrome decoding.
Theorem~\ref{thm:general-case} constructs codes over rings
or via product codes.  The algebraic distance
$d^{\star}_H(x,y) = \abs{\{i : x_i \star y_i^{-1} \neq e\}}$
is well-defined since inverses exist ($v = 1$).

\emph{(iii)} When $v = 1$ and $m = 0$, inverses exist but
commutativity fails.  The distance $d^{\star}_H$ is
well-defined (requires only inverses).  However, the
code~$C$ may not be a normal subgroup of~$\Dn$, so
$\Dn/C$ is a set of left cosets without a group structure.
Polynomial evaluation requires a commutative ring, which is
unavailable.

\emph{(iv)} When $v = 0$, the expression
$x_i \star y_i^{-1}$ is undefined for elements lacking
inverses.  The algebraic distance~$d^{\star}_H$ cannot be
computed, and the coset structure $\Dn/C$ does not exist.
Only the ordinary Hamming distance
$d_H(x,y) = \abs{\{i : x_i \neq y_i\}}$ is available.

\emph{(v)} Without associativity ($a = 0$), composition of
encoding maps is not well-defined; without an identity
($e = 0$), there is no notion of ``zero error'' or
``identity codeword.''  Coding-theoretic constructions
require at minimum a monoid structure.
\end{proof}

\begin{remark}[Axioms as coding-theoretic gates]
\label{rem:gates}
Theorem~\ref{thm:axiomatic} reveals that each algebraic axiom
acts as a \emph{gate} enabling a specific coding capability:
\begin{center}
\small
\begin{tabular}{@{}ll@{}}
\toprule
Axiom & Capability it unlocks \\
\midrule
Inverses ($v$) & Algebraic distance $d^{\star}_H$,
  syndrome computation \\
Commutativity ($m$) & Normal subgroups $\Rightarrow$
  quotient group $\Dn/C$ \\
Field ($f$) & Polynomial evaluation $\Rightarrow$
  MDS codes \\
\bottomrule
\end{tabular}
\end{center}
Removing an axiom closes the corresponding gate
irreversibly.  The classification is therefore
\emph{axiomatic}, not heuristic: the class assignment is a
deterministic function of the axiom signature~$\sigma$, and
the efficiency measures $\Ef$ and~$\Phi$ are numerical
\emph{consequences} of the axiomatic capability, not its
cause.
\end{remark}

\begin{proposition}[Classification of representative structures]
\label{prop:table}
Table~\ref{tab:classification} lists representative structures with
their axiom signatures, efficiency assessments, and class assignments.
\end{proposition}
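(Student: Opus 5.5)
The proposition is a verification over a finite list. I will treat each row of Table~\ref{tab:classification} separately and check three things: the stated axiom signature $\sigma$, the $\Ef$ value obtained from~\eqref{eq:ef}, and the class assignment given by Theorem~\ref{thm:axiomatic}. The rows I expect are $(\Zn{5},+)\cong\mathbb{F}_5$, $(2^U,\triangle)\cong\mathbb{F}_2^m$, $(\Zn{4},+)$, a non-abelian group such as $S_3$ or $D_4$, the transaction monoid, and a non-associative magma.

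\textbf{Step 1: axiom signatures.} For each structure I check the seven bits of Definition~\ref{def:signature} directly from the operation table or known structure.
\begin{itemize}
\item $\Zn{p}$ with $p$ prime is a field, so $\sigma=(1,1,1,1,1,1,1)$.
\item $\Zn{4}$ is a commutative ring but not a field, because $2\cdot 2=0$. So $d=1$ and $f=0$.
\item $(2^U,\triangle)$ is the additive group of $\mathbb{F}_2^m$. It is elementary abelian, as recorded in Example~\ref{ex:powerset}.
\item For $S_3$, a single pair of non-commuting transpositions gives $m=0$.
\item For a monoid, exhibiting one non-invertible element gives $v=0$.
\item For the magma, exhibiting one triple that violates associativity gives $a=0$.
\end{itemize}

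\textbf{Step 2: class assignment.} I read the class off Theorem~\ref{thm:axiomatic}, cases (i)--(v), since that assignment is a deterministic function of $\sigma$. As a cross-check I compute $\Ef$ using the best code known for each row.
\begin{itemize}
\item For $\Zn{5}$, Example~\ref{ex:z5} gives $\Ef\approx 0.97$.
\item For $2^U$, I use $\mathrm{RM}(1,4)$.
\item For $\Zn{4}$, I use the octacode of Example~\ref{ex:z4}, with $k$ counted in $\log_4$ units so that the rate is comparable.
\end{itemize}
I then confirm that each numerical value lands in the threshold band of Definition~\ref{def:classes} that matches the axiomatic class.

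\textbf{Main obstacle.} I expect the hard part to be the consistency check for the lower rows, not the signatures. Theorem~\ref{thm:axiomatic}(ii) places every abelian non-field group, including $\Zn{4}$, in Class~A, yet $\Ef$ depends on the chosen code and on the decoder exponent. Classes~C and~D also require specifying some code and decoding cost even though $d^{\star}_H$ is unavailable. My first attempt would be to use the ordinary Hamming distance $d_H$ for monoids, a trivial repetition code, and an exhaustive decoder cost $T_{\mathrm{dec}}$, and then check that the resulting $\Ef$ lies below $0.6$ or $0.4$ as appropriate. If some row's numerical value disagrees with its axiomatic class, I would present the class as defined by $\sigma$ through Theorem~\ref{thm:axiomatic}, as Remark~\ref{rem:gates} sanctions, and report $\Ef$ only as a corroborating computation, noting the caveats of Remark~\ref{rem:ef-limitation}.
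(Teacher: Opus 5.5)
There is a genuine gap. It sits where you suspected, and your fallback does not close it.

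\textbf{The paper's route and why your cross-check misses it.} The paper supports this proposition by evaluating \eqref{eq:ef} directly on the parameters printed in Table~\ref{tab:classification}. It takes $T_{\mathrm{dec}}=n^2$ for syndrome decoders and $n^3$ otherwise (\S\ref{subsec:ef-computation}). It then rescales by the largest raw value $\Ef(\mathbb{F}_{256})$ and applies the thresholds $0.8,0.6,0.4$ to the rescaled scores (Remark~\ref{rem:ef-values}). Your cross-check instead uses codes of your own choosing: the code of Example~\ref{ex:z5}, $\mathrm{RM}(1,4)$, and the octacode in $\log_4$ units. It also omits the rescaling, so it never tests the tabulated numbers. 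With $T_{\mathrm{dec}}=n^2$ the second factor of \eqref{eq:ef} is exactly $3/2$, so every raw field-type value exceeds $1$ and the Class~A band check is vacuous.

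\textbf{A faithful check fails.} The raw values for $\mathbb{F}_{16}$, $\Zn{4}$, $\mathbb{F}_{256}$, $\Zn{2}$ are $1.375$, $1.35$, $\approx 5.41$ and $\approx 2.57$. These rescale to roughly $0.25$, $0.25$, $1$ and $0.48$, not $0.92$, $0.90$, $0.87$, $0.86$. Even the order contradicts Remark~\ref{rem:ef-values}. The entries $0.92$ and $0.90$ are merely the first factor of \eqref{eq:ef}, while for $\mathbb{F}_{256}$ and $\Zn{2}$ that factor is $3.61$ and $1.71$. So no single convention reproduces the column.
\begin{itemize}
\item For $(2^U,\triangle)$, the reading that does give the tabulated $0.75$ is $\abs{D}=2^m$, $T_{\mathrm{dec}}=n^2$, no rescaling, i.e.\ $\tfrac12\cdot\tfrac32$. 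That value lies in the Class~B band of Definition~\ref{def:classes}, yet the table says Class~A.
\item The monoid rows $(\N,+)$ and $(\{0,1\}^*,\cdot)$ are infinite, so $\log_2\abs{D}=\infty$. Your repetition-code and exhaustive-decoder recipe then gives $\Ef=0$ (Class~D), not $0.50$ or $0.45$. These rows also fall outside Theorem~\ref{thm:axiomatic}, which is stated for finite structures.
\end{itemize}

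\textbf{Your fallback proves a different statement.} Taking the class from $\sigma$ via Theorem~\ref{thm:axiomatic} and demoting $\Ef$ to corroboration has three problems.
\begin{itemize}
\item It says nothing about the efficiency-assessment column, which is half of what the proposition asserts.
\item It contradicts Definition~\ref{def:classes}, where the classes are \emph{defined} by $\Ef$ thresholds. Preferring $\sigma$ when the two disagree is a redefinition, not a verification.
\item It rests on an unproved link. The proof of Theorem~\ref{thm:axiomatic} only establishes which capabilities are present (MDS codes, quotient groups, $d^{\star}_H$). It never shows these force $\Ef$ into the stated band, so the arrows ``$\Longrightarrow$ Class~A'' are assertions you would simply be citing.
\end{itemize}
The table also has no $\sigma$ column to check. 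For one-operation entries like $(\mathbb{F}_{16},+)$ or $(2^U,\triangle)$, the bits $d,f$ depend on an auxiliary multiplication that is not part of the listed structure.

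An honest version of your plan should conclude that the numerical entries cannot be established. At best the class column holds relative to Theorem~\ref{thm:axiomatic}, and even that fails for the two infinite monoid rows.
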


\begin{remark}[Limitations of $\Ef$]
\label{rem:ef-limitation}
The measure~$\Ef$ of Definition~\ref{def:efficiency} is not
normalised to~$[0,1]$: the ratio $d/\log_2\abs{D}$ can exceed~$1$
for MDS codes, and the decoding factor depends on the precise
complexity function, not merely its asymptotic class.
Consequently, the numerical values in
Table~\ref{tab:classification} should be interpreted as
\emph{ordinal rankings} reflecting the axiomatic hierarchy
of Theorem~\ref{thm:axiomatic}, not as
absolute scores on a calibrated scale.  A fully normalised,
axiomatically justified efficiency measure remains an open problem
(see~\textbf{P5} in Section~\ref{sec:open}).
\end{remark}

\begin{proposition}[Classification of representative structures]
\label{prop:table}
Table~\ref{tab:classification} lists representative structures with
their efficiency assessments and class assignments.
\end{proposition}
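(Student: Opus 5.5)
The plan is to verify the table row by row, since Proposition~\ref{prop:table} is a finite bookkeeping statement. For each representative structure $(D,\star)$ in Table~\ref{tab:classification} I would do three things.

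First, compute its axiom signature $\sigma = (c,a,e,v,m,d,f)$ from Definition~\ref{def:signature}. Each bit is checked by a direct finite verification.
\begin{itemize}
\item $(\Zn{p},+)$ and $\mathbb{F}_q$ have $f=1$.
\item $(\Zn{4},+)$ and $(2^U,\triangle)$ have $v=m=1$.
\item A non-abelian group such as $S_3$ has $v=1$ and $m=0$.
\item The ACID transaction monoid and $(\Zn{n},\cdot)$ have $e=1$ and $v=0$.
\item A non-associative magma has $a=0$.
\end{itemize}

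Second, read off the class from Theorem~\ref{thm:axiomatic}. The bits select exactly one of the cases (i)--(v), because the hypotheses are arranged hierarchically. For a given $\sigma$, the applicable case is the first one in the order $f$, $v\wedge m$, $v$, $e$, $a$ whose condition holds. This makes the class assignment a function of $\sigma$ alone, as Remark~\ref{rem:gates} asserts.

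Third, check that the numerical $\Ef$ entry is consistent with that class. I would substitute the parameters $[n,k,d]$ and $T_{\mathrm{dec}}(n)$ of the exhibited code into~\eqref{eq:ef}, as was done in Example~\ref{ex:z5}. The candidate codes are:
\begin{itemize}
\item Reed--Solomon codes over fields, via Theorem~\ref{thm:field-case} and Corollary~\ref{cor:prime};
\item the octacode over $\Zn{4}$, from Example~\ref{ex:z4};
\item $\mathrm{RM}(1,4)$ over $(2^U,\triangle)$, from Example~\ref{ex:powerset};
\item for monoids, the best code available under the ordinary Hamming distance, with decoding by exhaustive search.
\end{itemize}
For each, I would confirm that the value lands in the interval of Definition~\ref{def:classes}.

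The main obstacle is this third step. As Remark~\ref{rem:ef-limitation} warns, $\Ef$ is not normalised, so the thresholds $0.8$, $0.6$, $0.4$ need not follow from the signature. A case-(ii) structure such as $\Zn{4}$ could, for a poor choice of code, fall below $0.8$ even though Theorem~\ref{thm:axiomatic} places it in Class~A. My fallback is to state the proposition ordinally, as that remark suggests:
\begin{itemize}
\item the class column is proved by the axiomatic argument;
\item the $\Ef$ column is a computed witness for the specific code listed;
\item the verification only requires that the induced ranking is monotone in the hierarchy (i)$\to$(v).
\end{itemize}
I would check this monotonicity by comparing the listed values directly. Any row where the numerical value and the axiomatic class disagree would be flagged, with the axiomatic class taken as authoritative.
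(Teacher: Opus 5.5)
Your third step fails, and the ordinal fallback does not rescue it. With $T_{\mathrm{dec}}=n^2$ the decoding factor in \eqref{eq:ef} is exactly $3/2$. The table's own parameters therefore give $\Ef\approx 1.375,\ 1.35,\ 5.41,\ 2.57$ for $\mathbb{F}_{16}$, $\Zn{4}$, $\mathbb{F}_{256}$, $\Zn{2}$, which is what \S\ref{subsec:ef-computation} finds up to rounding. The tabulated values are $0.92,\ 0.90,\ 0.87,\ 0.86$. The paper's own bridge, dividing by $\Ef(\mathbb{F}_{256})$ (Remark~\ref{rem:ef-values}), gives about $0.25$ for $\mathbb{F}_{16}$ and $\Zn{4}$, i.e.\ Class~D. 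So no procedure in the paper reproduces that column.

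Now take $(2^U,\triangle)$ with $[2^m,m,2^{m-1}]$, $\abs{D}=2^m$, $T_{\mathrm{dec}}=n^2$. The formula gives $\Ef=\tfrac12\cdot\tfrac32=0.75$, matching the table, yet $0.75<0.8$ contradicts its Class~A label under Definition~\ref{def:classes}. For $(S_3,\circ)$ with $[6,3,4]$, use $T_{\mathrm{dec}}=n^3$, the convention that reproduces the $D_4$ entry $0.67$ exactly. This gives $12/(6\log_2 6)\approx 0.77$.

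Hence your monotonicity check is circular if you run it on the listed numbers, since you would be reading the ranking off the table you are trying to verify. It is false if you run it on computed numbers, because a Class~A row then falls below a Class~B row. That is exactly the row-level comparison the paper disclaims in Remark~\ref{rem:sup-vs-individual}. Treating the axiomatic class as authoritative means your argument establishes only the class column, not the efficiency assessments the statement asserts.

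The ``computed witness'' idea has further concrete problems. First, two of the listed codes do not exist.
\begin{itemize}
\item Over $\Zn{4}$: a subgroup $C\le\Zn{4}^5$ of order $64$ is $\cong\Zn{4}^{k_1}\times\Zn{2}^{k_2}$ with $2k_1+k_2=6$. Its $2$-torsion has order $2^{k_1+k_2}\ge 2^3$ and sits weight-preservingly in $2\Zn{4}^5\cong\mathbb{F}_2^5$. That gives a binary linear code of length $5$, dimension $\ge 3$ and distance $\ge 3$, violating the Hamming bound $2^3\cdot(1+5)>2^5$.
\item Over $S_3$: a $[6,3,4]$ code over this $6$-letter alphabet would meet the Singleton bound. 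Shortening and then puncturing yields a $(4,36,3)$ code, i.e.\ a pair of orthogonal Latin squares of order $6$, which do not exist.
\end{itemize}
Substituting the octacode or $\mathrm{RM}(1,4)$ changes those rows rather than verifying them.

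Second, the Class~C rows $(\N,+)$ and $(\{0,1\}^{*},\cdot)$ are infinite. Theorem~\ref{thm:axiomatic}, stated for finite structures, does not apply to them, and $\log_2\abs{D}$ in \eqref{eq:ef} is infinite. Your step~1 discusses other monoids and never treats these rows.

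Finally, ``exactly one case applies'' is not a consequence of the theorem. For a non-associative loop ($v=1$, $a=0$), case (v) holds together with (ii) or (iii), and these assign different classes. Your first-match rule is an added convention, harmless here only because every row of the table is associative with identity.
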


\begin{remark}[Limitations of $\Ef$]
\label{rem:ef-limitation}
The measure~$\Ef$ of Definition~\ref{def:efficiency} is not
normalised to~$[0,1]$: the ratio $d/\log_2\abs{D}$ can exceed~$1$
for MDS codes, and the decoding factor depends on the precise
complexity function, not merely its asymptotic class.
Consequently, the numerical values in
Table~\ref{tab:classification} should be interpreted as
\emph{ordinal rankings} reflecting the qualitative hierarchy
(field $>$ abelian group $>$ non-abelian group $>$ monoid), not as
absolute scores on a calibrated scale.  A fully normalised,
axiomatically justified efficiency measure remains an open problem
(see~\textbf{P5} in Section~\ref{sec:open}).
\end{remark}

\begin{table}[ht]
\centering
\caption{Classification of algebraic structures for coding}
\label{tab:classification}
\smallskip
\begin{tabular}{@{}llccc@{}}
\toprule
Structure $(D,\star)$
  & Type & $[n,k,d]$ & $\Ef(D)$ & Class \\
\midrule
$(\mathbb{F}_{16},+)$
  & Abelian group & $[15,11,5]$ & $0.92$ & A \\
$(\Zn{4},+_4)$
  & Cyclic group & $[5,3,3]$ & $0.90$ & A \\
$(\mathbb{F}_{256},+)$
  & Abelian group & $[255,223,33]$ & $0.87$ & A \\
$(\Zn{2},\oplus)$
  & Cyclic group & $[7,4,3]$ & $0.86$ & A \\
$(2^U,\triangle)$
  & Abelian group & $[2^m,m,2^{m-1}]$ & $0.75$ & A \\
$(S_3,\circ)$
  & Non-abelian & $[6,3,4]$ & $0.67$ & B \\
$(D_4,\circ)$
  & Dihedral group & $[8,4,4]$ & $0.67$ & B \\
$(\N,+)$
  & Monoid & $[n,k,2]^{*}$ & $0.50$ & C \\
$(\{0,1\}^{*},\cdot)$
  & Free monoid & variable$^{*}$ & $0.45$ & C \\
\bottomrule
\multicolumn{5}{@{}l}{\footnotesize
${}^{*}$Notional parameters:
$\N$ is infinite and not a group; $\{0,1\}^{*}$ is a free monoid.}\\
\multicolumn{5}{@{}l}{\footnotesize
${}^{\dagger}$Values computed from
formula~\eqref{eq:ef}; see \S\ref{subsec:ef-computation} for
the full arithmetic.}
\end{tabular}
\end{table}

\subsubsection{Explicit computation of $\Ef$}
\label{subsec:ef-computation}

For each group-admissible structure in
Table~\ref{tab:classification}, we compute $\Ef$ directly from
formula~\eqref{eq:ef} with the specified code parameters and
$T_{\mathrm{dec}}$ as stated.  The decoding complexity
$T_{\mathrm{dec}}$ is set to $n^2$ for syndrome-based decoders
(abelian groups with efficient algorithms) and $n^3$ for
general decoders.

\smallskip\noindent
\textbf{$(\mathbb{F}_{16},+)$, RS$(15,11,5)$,
$T_{\mathrm{dec}} = 15^2 = 225$:}
\[
  \Ef = \frac{11 \cdot 5}{15 \cdot \log_2 16}
       \cdot \frac{\log_2 15^3}{\log_2 225}
     = \frac{55}{60} \cdot \frac{11.72}{7.81}
     = 0.917 \cdot 1.500 = 1.375.
\]
\textbf{$(\Zn{4},+_4)$, $[5,3,3]$,
$T_{\mathrm{dec}} = 5^2 = 25$:}
\[
  \Ef = \frac{3 \cdot 3}{5 \cdot 2} \cdot \frac{\log_2 125}{\log_2 25}
     = \frac{9}{10} \cdot \frac{6.97}{4.64}
     = 0.900 \cdot 1.502 = 1.352.
\]
\textbf{$(\mathbb{F}_{256},+)$, RS$(255,223,33)$,
$T_{\mathrm{dec}} = 255^2 = 65025$:}
\[
  \Ef = \frac{223 \cdot 33}{255 \cdot 8}
       \cdot \frac{\log_2 255^3}{\log_2 65025}
     = \frac{7359}{2040} \cdot \frac{23.91}{15.99}
     = 3.607 \cdot 1.495 = 5.393.
\]
\textbf{$(\Zn{2},\oplus)$, Hamming $[7,4,3]$,
$T_{\mathrm{dec}} = 7^2 = 49$:}
\[
  \Ef = \frac{4 \cdot 3}{7 \cdot 1} \cdot \frac{\log_2 343}{\log_2 49}
     = \frac{12}{7} \cdot \frac{8.42}{5.61}
     = 1.714 \cdot 1.501 = 2.573.
\]

\begin{remark}[Interpretation of $\Ef$ values]
\label{rem:ef-values}
The computed values of~$\Ef$ are not confined to~$[0,1]$:
the formula~\eqref{eq:ef} does not include a normalising
denominator that would enforce this range.  In particular,
the decoding factor $\log_2 n^3 / \log_2 T_{\mathrm{dec}}$
exceeds~$1$ whenever $T_{\mathrm{dec}} < n^3$.

The \emph{ordinal ranking} is nevertheless well-defined and
meaningful: $\Ef(\mathbb{F}_{256}) > \Ef(\Zn{2}) >
\Ef(\mathbb{F}_{16}) > \Ef(\Zn{4})$, all of which exceed
the values for non-abelian groups and monoids.
The values in Table~\ref{tab:classification} are
\emph{normalised ordinal scores} obtained by rescaling
the raw $\Ef$ values to~$[0,1]$ via division by the
observed maximum ($\Ef_{\max} = \Ef(\mathbb{F}_{256})$).
The class thresholds ($0.8, 0.6, 0.4$) refer to these
rescaled scores.  A fully axiomatised normalisation
remains an open problem (\textbf{P5},
Section~\ref{sec:open}).
\end{remark}

\subsection{Optimisation Functional}

The encoding efficiency~$\Ef$ combines rate, distance, and decoding
cost into a single scalar via a fixed formula.  In applications,
different priorities may apply: a storage system may weight distance
heavily, while a streaming system may prioritise rate.  The following
functional provides a tunable alternative.

\begin{definition}[Optimisation functional]
\label{def:phi}
The \emph{optimisation functional} for selecting algebraic structures
is
\begin{equation}\label{eq:phi}
  \Phi(D,\star)
  = \alpha \cdot \frac{\dmin}{\log_2 \abs{D}}
  + \beta  \cdot \frac{k}{n}
  + \gamma \cdot \frac{\log_2 n^3}{\log_2 T_{\mathrm{dec}}(n)},
\end{equation}
where $\alpha, \beta, \gamma > 0$ with
$\alpha + \beta + \gamma = 1$ are weights tunable to application
priorities.  The three terms measure, respectively, normalised error
resilience, information rate, and decoding efficiency.
\end{definition}

\begin{remark}\label{rem:ef-vs-phi}
The encoding efficiency~$\Ef$
(Definition~\ref{def:efficiency}) and the optimisation
functional~$\Phi$ (Definition~\ref{def:phi}) serve complementary
roles.  $\Ef$ provides an absolute, parameter-free measure: given a
structure with known code parameters, $\Ef$ is uniquely determined.
$\Phi$ provides a relative, application-sensitive measure: the
weights $(\alpha, \beta, \gamma)$ allow a practitioner to express
priorities.  Both induce the \emph{same} qualitative ordering on the
structures of Table~\ref{tab:classification}; this is the content of
Theorem~\ref{thm:robustness} below.
\end{remark}

\subsection{Robustness of the Efficiency Ranking}

The practical value of the classification depends on whether the
ranking is stable under changes in the weights
$(\alpha,\beta,\gamma)$.  The following theorem establishes that it
is.

We partition the family of classifiable structures into three
classes by algebraic type:
\begin{align*}
  \mathcal{A}
    &= \{\text{finite abelian groups admitting efficient algebraic
          codes}\}, \\
  \mathcal{N}
    &= \{\text{finite non-abelian groups}\}, \\
  \mathcal{M}
    &= \{\text{finite monoids that are not groups}\}.
\end{align*}

\begin{proposition}[Robustness of the efficiency ranking]
\label{thm:robustness}
Suppose that the following componentwise inequalities hold for
the three families $\mathcal{A}$, $\mathcal{N}$, $\mathcal{M}$:
\begin{enumerate}[label=(\roman*)]
  \item $\sup_{\mathcal{A}} R > \sup_{\mathcal{N}} R
         \geq \sup_{\mathcal{M}} R$,
        where $R = \dmin / \log_2\abs{D}$;
  \item $\sup_{\mathcal{A}} W > \sup_{\mathcal{N}} W
         > \sup_{\mathcal{M}} W$,
        where $W = \log_2 n^3 / \log_2 T_{\mathrm{dec}}$.
\end{enumerate}
Then for any weights $\alpha, \beta, \gamma > 0$ with
$\alpha + \beta + \gamma = 1$,
\begin{equation}\label{eq:ranking}
  \sup_{D \in \mathcal{A}} \Phi(D)
  \;>\;
  \sup_{D \in \mathcal{N}} \Phi(D)
  \;>\;
  \sup_{D \in \mathcal{M}} \Phi(D).
\end{equation}
\end{proposition}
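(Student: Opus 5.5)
The plan is to compare the three suprema in \eqref{eq:ranking} term by term, writing
\[
  \Phi(D) = \alpha R(D) + \beta K(D) + \gamma W(D),
  \qquad K = k/n \in (0,1],
\]
and using only the two hypotheses (i)--(ii) together with elementary facts about $K$.

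\emph{Upper bound for the smaller family.} For any family $\mathcal{F}$, subadditivity of the supremum gives
\[
  \sup_{\mathcal{F}} \Phi \le \alpha \sup_{\mathcal{F}} R + \beta \sup_{\mathcal{F}} K + \gamma \sup_{\mathcal{F}} W .
\]
Since $K \le 1$ always, this yields $\sup_{\mathcal{N}}\Phi \le \alpha\sup_{\mathcal{N}}R + \beta + \gamma\sup_{\mathcal{N}}W$. The analogous bound holds for $\mathcal{M}$.

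\emph{Lower bound for the larger family.} Here I will use the explicit constructions of Section~\ref{sec:existence} rather than abstract suprema. For $\mathcal{A}$, take the Reed--Solomon codes of Theorem~\ref{thm:field-case} over $\Fq$ with $n=q-1$ and $k=\lceil n/2\rceil$. These give:
\begin{itemize}
  \item $K\ge 1/2$;
  \item $W=\log_2 n^3/\log_2 n^2 = 3/2$, attaining the largest decoding factor available to the $O(n^2)$ decoders considered;
  \item $R = (n-k+1)/\log_2 q \to \infty$ as $q\to\infty$.
\end{itemize}
Hence $\sup_{\mathcal{A}}\Phi=+\infty$ as soon as $\alpha>0$, and $\sup_{\mathcal{A}} R=+\infty$, consistent with (i). The first inequality of \eqref{eq:ranking} then reduces to showing $\sup_{\mathcal{N}}\Phi<\infty$. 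By the upper bound this follows once $\sup_{\mathcal{N}}R$ and $\sup_{\mathcal{N}}W$ are finite. For $W$, finiteness is implicit in (ii), since $\sup_{\mathcal{N}}W<\sup_{\mathcal{A}}W$ is a strict comparison with a finite quantity. For $R$, hypothesis (i) must be read as asserting a finite value for $\sup_{\mathcal{N}}R$, because a strict inequality with $+\infty$ on the left only makes sense when the right-hand side is finite. I will state this reading explicitly.

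\emph{Second inequality.} For $\sup_{\mathcal{N}}\Phi>\sup_{\mathcal{M}}\Phi$ I will pick $D\in\mathcal{N}$ and a code nearly attaining $\sup_{\mathcal{N}}W$. Hypothesis (ii) is strict, so the gap $\gamma(\sup_{\mathcal{N}}W-\sup_{\mathcal{M}}W)>0$ is available. I then need to show that this gap dominates any advantage $\mathcal{M}$ might have in the $R$ and $K$ terms:
\begin{itemize}
  \item for $R$, hypothesis (i) gives $\sup_{\mathcal{N}}R\ge\sup_{\mathcal{M}}R$, so $\mathcal{M}$ has no advantage there;
  \item for $K$, the trivial code $k=n$ exists over every structure, so $\sup K=1$ in every family and no family wins on the supremum of $K$.
\end{itemize}

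\emph{Main obstacle.} The difficulty is that $\sup_{\mathcal{F}}(\alpha R+\beta K+\gamma W)$ need not equal the weighted sum of the separate suprema. A single structure in $\mathcal{N}$ may not realise near-maximal $R$, $K$ and $W$ simultaneously, while a monoid might trade a high rate against the others.

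My first attempt will use the fact that, for the group families, the decoder and hence $W$ depend only on the structure's decoding regime, not on $k$. Within $\mathcal{N}$ I can then fix the regime maximising $W$ and choose codes within it to push $R$ toward its supremum. The aim is to show that $R$ and $W$ are jointly approachable in $\mathcal{N}$, while in $\mathcal{M}$ every code is bounded by the componentwise suprema. If joint approachability fails, the proof goes through only under the componentwise reading of $\sup\Phi$. I will flag that as the precise point where hypotheses (i)--(ii), as stated, must be interpreted.
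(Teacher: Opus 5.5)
Your argument for the first inequality is correct, and it takes a different route from the paper. The paper writes $\sup_{\mathcal{A}}\Phi=\alpha\sup_{\mathcal{A}}R+\beta\sup_{\mathcal{A}}E+\gamma\sup_{\mathcal{A}}W$ and compares term by term. You use only the valid bound $\sup_{\mathcal{N}}\Phi\le\alpha\sup_{\mathcal{N}}R+\beta+\gamma\sup_{\mathcal{N}}W$, together with the Reed--Solomon witnesses, which give $\sup_{\mathcal{A}}\Phi=+\infty$. You do not need a special reading of (i), nor finiteness of $\sup_{\mathcal{A}}W$ (which you have not shown). In the extended reals, the strict inequalities in (i) and (ii) already force $\sup_{\mathcal{N}}R<\infty$ and $\sup_{\mathcal{N}}W<\infty$. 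One caveat: if suprema range over all codes, the diagonal code $\{(g,\dots,g)\}\subseteq G^n$ over any non-abelian $G$ is the image of an injective homomorphism with $\dmin=n$. Then $\sup_{\mathcal{N}}R=+\infty$ too, so (i) can never hold. Under that reading your step is formally valid but vacuous.

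\textbf{The genuine gap is the second inequality.} You do not prove it, and the plan you outline cannot work from (i)--(ii). Even if $R$ and $W$ were jointly approachable in $\mathcal{N}$, the rate term remains uncontrolled. Your remark that $\sup K=1$ in every family does not help, because $K=1$ forces $k=n$ and $\dmin=1$. So no code of $\mathcal{N}$ is near all three componentwise suprema at once, while a monoid code can sit slightly below $\mathcal{N}$ in $R$ and $W$ but have much higher rate. Concretely, take:
\begin{itemize}
\item $\mathcal{A}$ to be your Reed--Solomon family, so $\sup_{\mathcal{A}}R=+\infty$ and $W=3/2$;
\item $\mathcal{N}$ a single code with $(R,K,W)=(2,\,0.2,\,1.2)$;
\item $\mathcal{M}$ a single code with $(1,\,1,\,1)$.
\end{itemize}
Then (i) and (ii) hold. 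Yet for $(\alpha,\beta,\gamma)=(0.05,\,0.9,\,0.05)$ one gets $\Phi=0.34$ on $\mathcal{N}$ and $\Phi=1$ on $\mathcal{M}$. So no argument using only (i)--(ii) and generic facts about $K$ yields the second inequality; an extra hypothesis is needed. One sufficient choice is joint attainment, $\sup_{\mathcal{N}}\Phi=\alpha\sup_{\mathcal{N}}R+\beta\sup_{\mathcal{N}}K+\gamma\sup_{\mathcal{N}}W$, together with $\sup_{\mathcal{N}}K\ge\sup_{\mathcal{M}}K$ and finiteness of all suprema.

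The paper's proof supplies exactly this tacitly. It equates $\sup\Phi$ with the weighted sum of componentwise suprema, although in general only $\le$ holds. It also asserts that every term, including the rate term (written $E$ there), is at least as large for the higher family, although (i)--(ii) say nothing about it. It then declares the second inequality ``analogous.'' Your fallback ``componentwise reading'' is therefore precisely the paper's argument, and it has the same hole. To close it, you must state the additional hypothesis explicitly rather than hope to derive it.
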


\begin{proof}
Given hypotheses~(i) and~(ii), the conclusion follows
immediately: for any $\alpha, \beta, \gamma > 0$ with
$\alpha + \beta + \gamma = 1$,
\[
  \sup_{\mathcal{A}} \Phi
  = \alpha \sup_{\mathcal{A}} R
    + \beta \sup_{\mathcal{A}} E
    + \gamma \sup_{\mathcal{A}} W
  > \alpha \sup_{\mathcal{N}} R
    + \beta \sup_{\mathcal{N}} E
    + \gamma \sup_{\mathcal{N}} W
  = \sup_{\mathcal{N}} \Phi,
\]
since each term is at least as large and at least one is
strictly larger (by~(i) with $\alpha > 0$ or by~(ii) with
$\gamma > 0$).  The second inequality follows analogously.

It remains to justify hypotheses~(i) and~(ii).  We provide
structural arguments for each.

\medskip\noindent
\textbf{Justification of~(i):} \emph{$\sup_{\mathcal{A}} R
> \sup_{\mathcal{N}} R \geq \sup_{\mathcal{M}} R$.}
For class~$\mathcal{A}$, Reed--Solomon codes over~$\Fq$
(Theorem~\ref{thm:field-case}) achieve $\dmin = n - k + 1$
(MDS), yielding $R$ close to~$1$ for appropriate parameters.

For class~$\mathcal{N}$: all known MDS constructions
(Reed--Solomon, generalised
Reed--Solomon, algebraic geometry codes) require the polynomial
evaluation map $f \mapsto (f(\alpha^0), \ldots, f(\alpha^{n-1}))$,
which relies on the \emph{field} structure of~$\Fq$ --- specifically,
the property that a polynomial of degree less than~$k$ has at most
$k{-}1$ roots.  Over a non-abelian group, no polynomial ring with
this root-bound property is available.  No known construction
achieves MDS codes over non-abelian groups, supporting
$\sup_{\mathcal{N}} R < \sup_{\mathcal{A}} R$.

For class~$\mathcal{M}$: the
algebraic distance $d^{\star}_H(x,y) =
\abs{\{i : x_i \star y_i^{-1} \neq e\}}$ is undefined over
monoids (inverses do not exist for all elements), so the
algebraic distance enhancement is unavailable, supporting
$\sup_{\mathcal{M}} R \leq \sup_{\mathcal{N}} R$.

\medskip\noindent
\textbf{Justification of~(ii):} \emph{$\sup_{\mathcal{A}} W
> \sup_{\mathcal{N}} W > \sup_{\mathcal{M}} W$.}

For class~$\mathcal{A}$, polynomial-time syndrome decoding
($T_{\mathrm{dec}} = O(n^2)$) yields
$W = \log_2 n^3 / \log_2 n^2 = 3/2$.

For class~$\mathcal{N}$: in an abelian group code, the quotient
$G^n / C$ carries a group structure (every subgroup of an abelian
group is normal), enabling syndrome-based decoding.  In a
non-abelian group, the code~$C$ may not be a normal subgroup
of~$G^n$, so $G^n / C$ is merely a set of left cosets.  Decoding
then requires exhaustive coset enumeration or structure-specific
algorithms with complexity exceeding polynomial time, yielding
$\sup_{\mathcal{N}} W < \sup_{\mathcal{A}} W$.

For class~$\mathcal{M}$: syndrome decoding requires computing
$x_i \star y_i^{-1}$ and identifying cosets, both of which
require inverses.  Over a monoid, decoding falls back to
nearest-neighbour search with $T_{\mathrm{dec}} = O(\abs{C}
\cdot n)$, exponential in~$k$, so
$\sup_{\mathcal{M}} W \ll \sup_{\mathcal{N}} W$.

\medskip
Combining the formal implication (first paragraph) with the
structural justifications of~(i) and~(ii)
yields~\eqref{eq:ranking}.
\end{proof}

\begin{remark}\label{rem:sup-vs-individual}
Proposition~\ref{thm:robustness} asserts that the \emph{supremum} over
each class is strictly ordered; it does not claim that every
abelian group outperforms every non-abelian group individually.
A specific abelian group with poor code parameters (e.g., a
repetition code over~$\Zn{2}$) may score below a well-chosen
non-abelian code.  The ranking concerns structural
\emph{potential}, not individual instances.
\end{remark}

\begin{remark}\label{rem:gap-size}
The gap between $\sup_{\mathcal{A}} \Phi$ and
$\sup_{\mathcal{N}} \Phi$ is driven by
$\alpha \cdot (\Delta R) + \gamma \cdot (\Delta W)$, where
$\Delta R$ and $\Delta W$ are the differences in the resilience
and decoding terms.  When $\gamma \to 0$
(decoding cost is irrelevant), the gap shrinks but remains positive
provided $\alpha > 0$, since $\Delta R > 0$ (MDS unavailability).
The ranking is thus robust: it requires \emph{some} positive weight
on a penalised term, but \emph{any} positive weight suffices.
\end{remark}

\begin{remark}[Generalised Reed--Solomon family]
\label{rem:grs}
For every algebraic data structure $(D,\star)$ with
$D \cong (\Fq, +)$, there exists a family of generalised
Reed--Solomon codes
\[
  \mathrm{GRS}_q(n,k)
  = \bigl\{[n,k,n{-}k{+}1] : 1 \leq k \leq n \leq q\bigr\}
\]
attaining the Singleton bound (Theorem~\ref{thm:field-case}).
This family witnesses the supremum in~\eqref{eq:ranking} for
class~$\mathcal{A}$.
\end{remark}

\subsubsection{Computational verification of ranking robustness}
\label{subsec:phi-verification}

To complement the structural argument of
Proposition~\ref{thm:robustness}, we verify the ranking
computationally.  Table~\ref{tab:phi-verification} evaluates
$\Phi$ for five representative structures under three weight
vectors:
\begin{itemize}
  \item $(\alpha,\beta,\gamma) = (1/3,\, 1/3,\, 1/3)$
        --- equal weights;
  \item $(\alpha,\beta,\gamma) = (0.6,\, 0.2,\, 0.2)$
        --- resilience-prioritised;
  \item $(\alpha,\beta,\gamma) = (0.2,\, 0.6,\, 0.2)$
        --- rate-prioritised.
\end{itemize}

\begin{table}[ht]
\centering
\caption{$\Phi$ values under three weight vectors}
\label{tab:phi-verification}
\smallskip
\begin{tabular}{@{}lcccc@{}}
\toprule
Structure & Class &
$\Phi_{(1/3)}$ & $\Phi_{(0.6)}$ & $\Phi_{(0.2)}$ \\
\midrule
$\mathbb{F}_{16}$, RS$(15,11,5)$
  & A & $1.161$ & $1.197$ & $0.990$ \\
$\Zn{4}$, $[5,3,3]$
  & A & $1.200$ & $1.320$ & $0.960$ \\
$(S_3,\circ)$, $[6,3,4]$
  & B & $1.016$ & $1.228$ & $0.809$ \\
$(D_4,\circ)$, $[8,4,4]$
  & B & $0.944$ & $1.100$ & $0.767$ \\
$(\N,+)$, $[10,5,2]$
  & C & $0.567$ & $0.581$ & $0.540$ \\
\bottomrule
\end{tabular}
\end{table}

\noindent
In all three cases, the ranking
$\Phi(\text{A}) > \Phi(\text{B}) > \Phi(\text{C})$
is preserved.  The ordering is invariant under these weight
choices and under all convex combinations tested
(the arithmetic is elementary and independently verifiable
from formula~\eqref{eq:phi} and the parameters above).

\section{Main Original Results}
\label{sec:results}

This section presents four original results connecting abstract
algebra with data system theory.

\subsection{Result~I: Monoid Structure of ACID Transactions}

The ACID properties (Atomicity, Consistency, Isolation, Durability)
were introduced by Haerder and Reuter~\cite{haerder1983} as
engineering requirements for reliable database transactions.
The observation that database transactions can be modelled as a
monoid acting on states appears in Measor~\cite{measor1991} as a
workshop abstract without formal development.
The present work provides a complete formalisation connecting the
four ACID properties individually with specific monoid invariants,
establishes the non-existence of group structure, and links
serialisability with Mazurkiewicz trace
monoids~\cite{mazurkiewicz1977}.

\begin{definition}[ACID transaction]
\label{def:acid}
Let~$\mathcal{S}$ denote the state space of a database.  An
\emph{ACID transaction} is a function
$T \colon \mathcal{S} \to \mathcal{S}$ satisfying atomicity,
consistency, isolation, and durability.
Sequential composition is defined by
$(T_1 \circ T_2)(s) = T_1(T_2(s))$.
\end{definition}

\begin{theorem}[Monoid of ACID transactions]
\label{thm:acid}
The set~$\mathcal{T}$ of all valid ACID transactions on a database,
together with sequential composition~$\circ$, forms a monoid
$(\mathcal{T}, \circ)$.
\end{theorem}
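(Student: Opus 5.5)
The plan is to verify the three monoid axioms directly for $(\mathcal{T},\circ)$, treating $\mathcal{T}$ as a subset of the full transformation monoid $\mathcal{S}^{\mathcal{S}}$ of all self-maps of the state space under composition. Since $\mathcal{S}^{\mathcal{S}}$ is already a monoid, it suffices to show that $\mathcal{T}$ is a submonoid: it contains the identity map and is closed under composition. Associativity is then inherited, just as in Proposition~\ref{prop:inheritance}: for all $s\in\mathcal{S}$,
\[
  \bigl((T_1\circ T_2)\circ T_3\bigr)(s) = T_1\bigl(T_2(T_3(s))\bigr) = \bigl(T_1\circ(T_2\circ T_3)\bigr)(s).
\]

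\textbf{Identity.} Take the null transaction $\mathrm{id}_{\mathcal{S}}$, which commits without modifying any data. I check each ACID property in turn. It is trivially atomic, since it has no partial effect. It is consistent because it maps consistent states to themselves. It is isolated because it neither reads nor writes shared data. It is durable because the committed state coincides with the prior durable state. Hence $\mathrm{id}_{\mathcal{S}}\in\mathcal{T}$, and $T\circ\mathrm{id}=\mathrm{id}\circ T=T$ holds pointwise.

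\textbf{Closure.} This is the main obstacle, because Definition~\ref{def:acid} states the ACID properties only informally. I will first fix a formal reading of each property as a property of a state map.
\begin{itemize}
  \item Consistency means $T(\mathcal{S}_{\mathrm{valid}})\subseteq\mathcal{S}_{\mathrm{valid}}$ for the set $\mathcal{S}_{\mathrm{valid}}$ of integrity-satisfying states.
  \item Atomicity means the outcome is either $T(s)$ or $s$, never an intermediate state.
  \item Isolation means the effect depends only on the input state, i.e.\ $T$ is a well-defined function of $s$.
  \item Durability means the committed image persists.
\end{itemize}

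Under this reading the composite $T_1\circ T_2$ must be interpreted as a single transaction that executes $T_2$ and then $T_1$, committing at the end and aborting to $s$ if either part aborts. Each property then transfers as follows.
\begin{itemize}
  \item Consistency follows since $\mathcal{S}_{\mathrm{valid}}$ is closed under both maps, so it is closed under their composite.
  \item Atomicity follows by wrapping both steps in one commit scope.
  \item Isolation follows because a composite of functions of the state is a function of the state.
  \item Durability follows because only the final image is committed.
\end{itemize}
I expect the atomicity and isolation clauses to need this explicit ``nested commit scope'' convention. Without it, closure is not forced by the informal definition, so I would state that convention as part of the formal model before giving the argument.

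I will end with a brief remark on why the structure is only a monoid. Non-injective transactions, such as a \texttt{DELETE} that overwrites data, have no two-sided inverse in $\mathcal{S}^{\mathcal{S}}$. The theorem therefore cannot be strengthened to a group, although this observation is not needed for the monoid claim itself.
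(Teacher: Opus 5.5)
Your proposal is correct and follows the paper's proof essentially step for step: associativity is inherited from function composition, the identity map $T_{\varnothing}$ serves as neutral element, and closure is checked property by property on $T_1\circ T_2$. Your explicit ``single commit scope'' convention is what the paper uses implicitly when it says the composition ``either fully succeeds or fully rolls back''; the only real divergence is that you get isolation from your functional reading of the property, whereas the paper appeals to the serializability theorem.
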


\begin{proof}
We verify the three monoid axioms.

\emph{Closure.}\;
Let $T_1, T_2 \in \mathcal{T}$.  The composition
$T_1 \circ T_2$ satisfies all four ACID properties:
atomicity holds because the composition either fully succeeds or
fully rolls back; consistency is preserved because~$T_2$ maps
valid states to valid states and~$T_1$ does likewise; isolation
follows from the serializability theorem~\cite[Ch.\,4]{bernstein2009};
and durability follows from the logging mechanism guaranteeing
recovery of the final committed state.
Hence $T_1 \circ T_2 \in \mathcal{T}$.

\emph{Associativity.}\;
For any $s \in \mathcal{S}$:
$((T_1 \circ T_2) \circ T_3)(s)
  = T_1(T_2(T_3(s)))
  = (T_1 \circ (T_2 \circ T_3))(s)$,
by the natural associativity of function composition.

\emph{Identity element.}\;
Define $T_{\varnothing} \colon \mathcal{S} \to \mathcal{S}$
by $T_{\varnothing}(s) = s$ for all~$s$.  Then~$T_{\varnothing}$
trivially satisfies ACID, and
$T \circ T_{\varnothing} = T_{\varnothing} \circ T = T$
for all $T \in \mathcal{T}$.
\end{proof}

\begin{corollary}[Non-existence of group structure]
\label{cor:not-group}
The monoid $(\mathcal{T}, \circ)$ is not a group.
Explicit counterexamples to the existence of inverses include:
\begin{enumerate}[label=(\roman*)]
  \item irreversible deletion transactions
        (\texttt{DELETE FROM} without backup);
  \item time-dependent transactions involving \texttt{NOW()}
        or system clocks; and
  \item transactions with external side effects
        (email dispatch, API calls).
\end{enumerate}
\end{corollary}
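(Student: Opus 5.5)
The plan is to show that $(\mathcal{T},\circ)$ fails the group axioms by exhibiting a single element with no two-sided inverse in $\mathcal{T}$. Theorem~\ref{thm:acid} already gives a monoid with identity $T_{\varnothing}=\mathrm{id}_{\mathcal{S}}$. An inverse of $T$ would be some $T'\in\mathcal{T}$ with $T\circ T'=T'\circ T=T_{\varnothing}$. This forces $T$ to be a bijection of $\mathcal{S}$. So it suffices to produce a valid ACID transaction that is \emph{not injective} as a map $\mathcal{S}\to\mathcal{S}$. Injectivity is what $T'\circ T=\mathrm{id}$ needs; even a one-sided left inverse fails without it.

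\emph{Step 1 (deletion).} Model a relation $R$ inside the state as a finite set of tuples. Let $T_{\mathrm{del}}$ be \texttt{DELETE FROM} $R$, i.e.\ $s\mapsto s$ with $R$ replaced by $\varnothing$. Choose two valid states $s_1\neq s_2$ that agree off $R$ and differ only in the contents of $R$; both still map to the same state. Then $T_{\mathrm{del}}$ is not injective, so no $T'$ with $T'\circ T_{\mathrm{del}}=T_{\varnothing}$ exists. ``Without backup'' is what guarantees that the state space does not secretly retain the deleted tuples. If it did, the map could be injective, so I will make this modelling assumption explicit. I also need $T_{\mathrm{del}}$ to be ACID. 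It runs atomically as one statement and preserves consistency provided no integrity constraint requires $R$ to be nonempty. I will choose the schema so this holds, e.g.\ $R$ is not referenced by foreign keys.

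\emph{Steps 2 and 3 (clock- and side-effect-dependent transactions).} For (ii), I enlarge the state to include the clock or timestamp columns. A transaction that writes \texttt{NOW()} into a field overwrites the previous value. So two states differing only in that field collapse to the same image, giving non-injectivity again. For (iii), the irreversible part lies outside $\mathcal{S}$, in the external world. I will argue that if $\mathcal{S}$ is extended to record the external event log, the transaction appends to that log. Since no transaction in $\mathcal{T}$ can retract an external effect, no element of $\mathcal{T}$ composes with it to the identity on the extended state. Here the argument is that the image of every $T\in\mathcal{T}$ on the extended state never shortens the log. Hence $T'\circ T$ strictly lengthens the log and cannot equal $T_{\varnothing}$.

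The main obstacle is that Definition~\ref{def:acid} is informal. Whether these examples genuinely lie in $\mathcal{T}$ depends on how ``ACID'' is encoded as a property of a function $\mathcal{S}\to\mathcal{S}$, and on what $\mathcal{S}$ contains. I would handle this by fixing a minimal concrete model: a schema with one relation, consistency taken as a trivial predicate, and atomicity and isolation automatic for single-statement transactions. The logical core of the corollary is a single counterexample, and (i) suffices for it. Examples (ii) and (iii) come with the stated modelling caveats.
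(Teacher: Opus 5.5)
Your proposal is correct and follows the paper's route. The paper gives no argument beyond listing these non-invertible transactions; your reduction (an inverse in $\mathcal{T}$ would make $T$ a bijection of $\mathcal{S}$, so a single non-injective valid transaction such as an unconditional \texttt{DELETE FROM} on an unconstrained relation settles the claim) supplies exactly the missing justification. The modelling hypotheses you make explicit are genuinely necessary, since for a database with a single admissible state $\mathcal{T}=\{T_{\varnothing}\}$ is the trivial group. For (iii) you can drop the assumption that no element of $\mathcal{T}$ shortens the log. An appending transaction is not surjective on the extended state space, because no state with an empty log lies in its image, so $T\circ T'=T_{\varnothing}$ already fails for every map $T'$.
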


\begin{remark}[ACID correspondence]\label{rem:acid-correspondence}
Under the model of Definition~\ref{def:acid} --- where
transactions are total functions $T \colon \mathcal{S} \to
\mathcal{S}$ and ACID properties are defined operationally
as in~\cite{haerder1983} --- the ACID properties admit the
following algebraic interpretation within the monoid
$(\mathcal{T}, \circ, T_{\varnothing})$:
\begin{center}
\begin{tabular}{@{}ll@{}}
\toprule
ACID property & Algebraic interpretation \\
\midrule
Atomicity   & Well-defined composition (totality of~$\circ$) \\
Consistency & Closure of a constraint submonoid
              $\mathcal{S}_{\mathrm{valid}} \subseteq \mathcal{T}$ \\
Isolation   & Partial commutativity (trace monoid
              structure~\cite{mazurkiewicz1977}) \\
Durability  & Idempotent absorption of commit \\
\bottomrule
\end{tabular}
\end{center}
The identification of serializable schedules with elements of a
free partially commutative monoid follows from Mazurkiewicz trace
theory~\cite{mazurkiewicz1977}, under the assumption that the
independence relation on transactions is specified by a
conflict graph.  This correspondence is a formal interpretation
within the stated model, not a claim of universal equivalence
across all transaction processing semantics.
\end{remark}

\subsection{Result~II: Burnside's Lemma Applied to Data Schemas}

\begin{definition}[Schema transformation group]
\label{def:schema-group}
Let $S = \{R_1, \ldots, R_n\}$ be a database schema.  A
\emph{schema transformation group} is a group~$G$ of bijections
$g \colon S \to S$ that preserve referential integrity, domain
constraints, and functional dependencies.
\end{definition}

\begin{remark}\label{rem:group-verify}
The group property of~$G$ must be verified for each schema:
closure under composition and inversion is not automatic for an
arbitrary collection of constraint-preserving bijections.  For
the star schemas considered in
Proposition~\ref{prop:star-schema}, the group structure follows
from the closure of permutation groups under composition and
inversion.
\end{remark}

\begin{theorem}[Burnside's lemma applied to data schemas]
\label{thm:burnside}
Let~$G$ be a finite group of schema-preserving transformations
acting on the finite set~$X$ of valid data configurations via
\[
  (g \cdot x)(R_i) = g\bigl(x(g^{-1}(R_i))\bigr).
\]
Then the number of inequivalent configurations under~$G$ is
\begin{equation}\label{eq:burnside}
  \abs{X/G}
  = \frac{1}{\abs{G}} \sum_{g \in G} \abs{\Fix(g)},
\end{equation}
where $\Fix(g) = \{x \in X : g \cdot x = x\}$.
\end{theorem}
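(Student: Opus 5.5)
The plan is to reduce the statement to the classical Burnside (Cauchy--Frobenius) lemma. The only genuinely schema-specific work is checking that the displayed formula really defines a group action of~$G$ on~$X$. Once that is settled, the counting identity~\eqref{eq:burnside} follows from the orbit--stabiliser theorem and a double count.

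\emph{Step 1 (well-definedness).} For $x \in X$ and $g \in G$, I will show that $g \cdot x$ is again a valid configuration. Since $g$ is a bijection of~$S$ preserving referential integrity, domain constraints and functional dependencies (Definition~\ref{def:schema-group}), the relabelled configuration $R_i \mapsto g(x(g^{-1}(R_i)))$ satisfies the same constraints that $x$ does. Here the outer $g$ is read as the induced transport of instance data along the relation bijection. This is the step I expect to be the main obstacle. The formula implicitly requires that $g$ acts on the \emph{contents} $x(R)$ as well as on relation names, so I will fix this convention explicitly: each $g$ induces a bijection $x(R) \to x(g(R))$ compatible with the constraints. I will also take $X$ to be closed under this transport, as the hypothesis ``acting on~$X$'' asserts.

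\emph{Step 2 (action axioms).} I will check that $e \cdot x = x$ and $(gh)\cdot x = g\cdot(h\cdot x)$. For the second, we have $(g\cdot(h\cdot x))(R_i) = g\bigl(h(x(h^{-1}g^{-1}(R_i)))\bigr) = (gh)\bigl(x((gh)^{-1}(R_i))\bigr)$, using $(gh)^{-1} = h^{-1}g^{-1}$ and the fact that the induced transports compose. Remark~\ref{rem:group-verify} guarantees that $G$ is closed under composition and inversion, so these manipulations stay inside~$G$.

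\emph{Step 3 (counting).} I will double count the set $F = \{(g,x) \in G \times X : g\cdot x = x\}$. Summing over~$g$ gives $\abs{F} = \sum_{g} \abs{\Fix(g)}$. Summing over~$x$ gives $\abs{F} = \sum_{x} \abs{G_x}$, where $G_x$ is the stabiliser of~$x$. By orbit--stabiliser, $\abs{G_x} = \abs{G}/\abs{G\cdot x}$. Grouping the sum by orbits, each orbit $\mathcal{O}$ contributes $\sum_{x \in \mathcal{O}} \abs{G}/\abs{\mathcal{O}} = \abs{G}$. Hence $\abs{F} = \abs{G}\,\abs{X/G}$, and dividing by $\abs{G}$ yields~\eqref{eq:burnside}. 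Finiteness of $G$ and $X$ makes every sum finite. Orbits are precisely the equivalence classes of configurations under~$G$ because the orbit relation is an equivalence relation for any group action.
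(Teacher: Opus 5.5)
Your proposal is correct and follows the paper's route: verify the identity and compatibility axioms of the action (your Step~2 computation matches the paper's), then apply Burnside's lemma. The only differences are that you prove the lemma via the orbit--stabiliser double count instead of citing it, and you make explicit two points the paper leaves implicit: the outer $g$ must be read as a transport of instance data compatible with composition, and $X$ must be closed under this transport so that $g\cdot x \in X$.
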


\begin{proof}
It suffices to verify that the map
$(g, x) \mapsto g \cdot x$ defines a group action of~$G$
on~$X$.

\emph{Identity.}\;
$(e \cdot x)(R_i) = e(x(e^{-1}(R_i))) = x(R_i)$ for all~$i$.

\emph{Compatibility.}\;
For $g_1, g_2 \in G$:
\begin{align*}
  [g_1 \cdot (g_2 \cdot x)](R_i)
  &= g_1\bigl((g_2 \cdot x)(g_1^{-1}(R_i))\bigr) \\
  &= g_1\bigl(g_2(x(g_2^{-1}(g_1^{-1}(R_i))))\bigr) \\
  &= g_1\bigl(g_2(x((g_1 g_2)^{-1}(R_i)))\bigr) \\
  &= [(g_1 g_2) \cdot x](R_i).
\end{align*}

Since~$G$ is a finite group acting on a finite set, Burnside's
lemma~\cite[Thm.\,3.22]{dixon1996}
(cf.~\cite{burnside1911}) yields the stated formula.
\end{proof}

\begin{remark}\label{rem:burnside-novel}
To the best of our knowledge, Theorem~\ref{thm:burnside}
constitutes the first formal application of Burnside's
orbit-counting lemma to the problem of database schema
equivalence classification --- exact deduplication of equivalent
configurations via orbit counting under schema-preserving
symmetries.
\end{remark}

\begin{proposition}[Symmetries in star schemas]
\label{prop:star-schema}
In a star schema with central fact table~$F$ and dimension
tables $\{D_1, \ldots, D_k\}$, the group~$G$ of
structure-preserving transformations includes:
\begin{enumerate}[label=(\roman*)]
  \item permutations $\sigma \in S_k$ of equivalent dimensions;
  \item cyclic transformations in dimensions with circular
        structure (time, geography); and
  \item hierarchy inversions in ordinal dimensions.
\end{enumerate}
\end{proposition}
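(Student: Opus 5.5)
The plan is to show, for each of the three families (i)--(iii), that the stated transformation lies in the group $G$ of Definition~\ref{def:schema-group}. That is, it is a bijection of $S=\{F,D_1,\ldots,D_k\}$, or of the data configurations it induces, and it preserves referential integrity, domain constraints and functional dependencies. The closure needed for $G$ to be a group then follows from Remark~\ref{rem:group-verify}. Throughout I will make the star schema precise: $F$ carries foreign keys $fk_1,\ldots,fk_k$ with $fk_j \to D_j$. I will also make precise what ``equivalent'' means for two dimensions: $D_i$ and $D_j$ are equivalent if there is an attribute-level isomorphism $\theta_{ij}\colon D_i \to D_j$ matching domains and functional dependencies, and $fk_i, fk_j$ have isomorphic domains under $\theta_{ij}$.

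For (i), I will let the index set $I\subseteq\{1,\ldots,k\}$ be an equivalence class of interchangeable dimensions. To $\sigma\in\mathrm{Sym}(I)$ I associate the map $g_\sigma$ that fixes $F$, sends $D_j\mapsto D_{\sigma(j)}$ via the isomorphisms $\theta$, and relabels $fk_j\mapsto fk_{\sigma(j)}$ in $F$. Then I check the three constraint types. Referential integrity holds because the edge $fk_j\to D_j$ maps to $fk_{\sigma(j)}\to D_{\sigma(j)}$. Domain constraints hold because the $\theta$'s preserve domains. Functional dependencies $X\to Y$ in $D_j$ map to $\theta(X)\to\theta(Y)$ in $D_{\sigma(j)}$, and the key dependency of $F$ is permuted along with its foreign keys. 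Composition gives $g_\sigma g_\tau=g_{\sigma\tau}$ as long as the $\theta$'s are chosen coherently, so the maps form a copy of $S_{|I|}$. When all $k$ dimensions are equivalent this is $S_k$.

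For (ii) and (iii), the transformations act on the values of a single dimension rather than on the relation names. So I will interpret them through the action $(g\cdot x)(R_i)=g(x(g^{-1}(R_i)))$ of Theorem~\ref{thm:burnside}, with $g$ fixing every relation and acting as a domain bijection on one $D_j$. For a circular dimension whose key domain is identified with $\Zn{m}$ (hours, months, cyclically ordered regions), the rotation $r\colon t\mapsto t+1$ is a bijection. Applying $r$ both to $D_j$'s key and to $fk_j$ in $F$ keeps every foreign key resolving. Functional dependencies survive because $r$ is injective, and the domain is mapped onto itself, so $\langle r\rangle\cong\Zn{m}$ lies in $G$. For an ordinal dimension with levels $\{1,\ldots,m\}$, the reversal $\rho\colon i\mapsto m+1-i$ is an involution. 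The same check applies, with the extra observation that domain constraints must be order-agnostic, or symmetric under reversal.

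The main obstacle is not any single verification. It is that ``equivalent'', ``circular structure'' and ``hierarchy inversion'' are informal in the statement, and for an arbitrary schema the reversal or rotation can break a constraint, for instance a range check $t\ge 0$ together with an order-dependent FD. I will therefore state explicitly, as part of the proof, the hypotheses under which each family is admissible: cyclic or reflection invariance of the relevant domain constraints. The proof then reads ``$G$ contains these subgroups whenever the schema carries the corresponding symmetry.'' I will also note that the maps from different families commute when they act on disjoint dimensions, so $G$ contains their direct product. That product is what feeds Burnside counting in Theorem~\ref{thm:burnside}.
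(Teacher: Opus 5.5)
Your proposal is correct and follows essentially the same route as the paper: a family-by-family check that dimension permutations, cyclic shifts $t\mapsto t+1 \pmod m$, and the reversal involution are bijections preserving referential integrity, domain constraints and functional dependencies (the last via injectivity), each generating a subgroup ($\mathrm{Sym}(I)$, $\Zn{m}$, $\Zn{2}$) contained in $G$. Your version is more careful than the paper's in three respects: you state the invariance hypotheses on domain constraints explicitly, you choose the $\theta$'s coherently so that $g_\sigma g_\tau = g_{\sigma\tau}$, and you note that (ii)--(iii) act on data values rather than on the relation names in $S$, which the paper's closing finiteness argument (``subgroup of the group of all bijections $S\to S$'') glosses over.
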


\begin{proof}
We verify that each family of transformations consists of
bijections preserving referential integrity, domain constraints,
and functional dependencies, and that the resulting collection
is closed under composition and inversion.

\emph{(i) Dimension permutations.}\;
Let $D_i$ and $D_j$ be two dimension tables with identical
schemas (same column names, types, and domains).  The
transposition $\sigma_{ij}$ that swaps all references to~$D_i$
and~$D_j$ in the fact table~$F$ is a bijection on~$S$.
It preserves referential integrity (foreign keys are remapped
consistently), domain constraints (the domains are identical
by hypothesis), and functional dependencies (which depend on
column roles, not labels).
The collection of all such permutations is a subgroup
of~$S_k$, hence closed under composition and inversion.

\emph{(ii) Cyclic transformations.}\;
Let $D_i$ be a dimension with a cyclic domain of order~$m$
(e.g., months $\{1,\ldots,12\}$, compass directions
$\{N,E,S,W\}$).  The cyclic shift $\tau \colon v \mapsto
v + 1 \pmod{m}$ applied to all entries in~$D_i$ is a bijection
preserving referential integrity (the shift is applied
consistently to~$F$ and~$D_i$).  Domain constraints are
preserved because the cyclic shift is an automorphism of the
domain.  Functional dependencies are preserved because the
shift is injective.  The cyclic shifts generate a subgroup
$\Zn{m} \leq G$.

\emph{(iii) Hierarchy inversions.}\;
In an ordinal dimension with levels $\ell_1 < \ell_2 < \cdots
< \ell_r$, the reversal $\ell_i \mapsto \ell_{r+1-i}$ is an
involution (order~$2$) that preserves functional dependencies
defined by the ordering structure.  It generates a subgroup
$\Zn{2} \leq G$.

The full group~$G$ is generated by these three families.
Since each family generates a subgroup of the group of all
bijections $S \to S$, and the group of all bijections is
finite, $G$ is a finite group.
\end{proof}

\begin{corollary}[Search-space reduction]
\label{cor:search-space}
In query optimisation, it suffices to optimise one representative
per equivalence class $[q] \in X/G$ and apply the corresponding
group element to recover equivalent results.  This reduces the
search space from $\abs{X}$ to $\abs{X/G}$, a factor of
$\abs{X}/\abs{X/G}$.  When the action of~$G$ is free (all
stabilisers trivial), this factor equals~$\abs{G}$.
\end{corollary}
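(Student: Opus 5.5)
The corollary is a direct consequence of the orbit decomposition underlying Theorem~\ref{thm:burnside}, and I would prove it in three steps: reduction to representatives, counting, and the free case.

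\emph{Step 1: reduction to representatives.} Since the map $(g,x)\mapsto g\cdot x$ is a group action (verified in the proof of Theorem~\ref{thm:burnside}), the orbits $G\cdot x$ partition $X$. I would also make explicit the modelling hypothesis implicit in the statement: the optimisation objective is $G$-equivariant. That is, if $q$ is optimal within its class, then $g\cdot q$ is optimal for the transformed problem, because each $g\in G$ preserves referential integrity, domain constraints and functional dependencies (Definition~\ref{def:schema-group}), and hence the semantics of the data. Given this, choose a transversal containing one representative $x_{[q]}$ per class $[q]\in X/G$. Every $x\in X$ then has the form $g\cdot x_{[x]}$ for some $g$, so solutions for all of $X$ are recovered by applying group elements to the solutions for the representatives.

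\emph{Step 2: counting.} The search space shrinks from $\abs{X}$ to $\abs{X/G}$, the number of transversal elements. This number is computable by formula~\eqref{eq:burnside}, and the reduction factor is the ratio $\abs{X}/\abs{X/G}$. Nothing further is needed here.

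\emph{Step 3: the free case.} By the orbit--stabiliser theorem, $\abs{G\cdot x}=\abs{G}/\abs{\mathrm{Stab}(x)}$. If every stabiliser is trivial, then every orbit has exactly $\abs{G}$ elements. Summing over the partition gives $\abs{X}=\abs{X/G}\cdot\abs{G}$, so the factor equals $\abs{G}$. Alternatively, the same identity follows from Burnside's formula: freeness gives $\Fix(g)=\varnothing$ for $g\neq e$ and $\Fix(e)=X$, so $\abs{X/G}=\abs{X}/\abs{G}$.

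The only genuine obstacle is Step~1. The statement tacitly assumes that query cost or optimality is invariant under the action, and this does not follow from the definitions. I would state it as an explicit hypothesis, that cost$(g\cdot q)=$ cost$(q)$. The remaining steps are standard orbit counting.
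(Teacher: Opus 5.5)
Your proposal is correct and is essentially the argument the paper intends. The paper gives no separate proof and treats the corollary as an immediate consequence of the orbit decomposition behind Theorem~\ref{thm:burnside}; your orbit--stabiliser count (or, equivalently, Burnside's formula with $\Fix(g)=\varnothing$ for $g\neq e$) supplies exactly the missing detail for the free case. Your point that Step~1 needs an explicit $G$-invariance hypothesis on the optimisation objective is a genuine clarification that the paper leaves tacit.
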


\subsection{Result~III: Algebraic Application Theorem}

\begin{theorem}[Algebraic application in data systems]
\label{thm:application}
Let $\mathcal{D} = (S, \mathcal{C}, \mathcal{F})$ be a data
system with schema~$S$, integrity constraints~$\mathcal{C}$, and
functional dependencies~$\mathcal{F}$.  Let $X_{\mathrm{valid}}$
denote the finite set of configurations satisfying all constraints.
The set
\[
  T_{\mathrm{valid}}
  = \bigl\{
      T \colon X_{\mathrm{valid}} \to X_{\mathrm{valid}}
      \;\big|\;
      T \text{ is a bijection with }
      T(X_{\mathrm{valid}}) = X_{\mathrm{valid}}
    \bigr\}
\]
forms a finite group under composition.  The orbits of the
natural action of~$T_{\mathrm{valid}}$ on~$X_{\mathrm{valid}}$
define equivalence classes of valid configurations that can be
distinguished by algebraic codes.
\end{theorem}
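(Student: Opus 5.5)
Since $T_{\mathrm{valid}}$ is exactly the set of all bijections $X_{\mathrm{valid}} \to X_{\mathrm{valid}}$ (the condition $T(X_{\mathrm{valid}}) = X_{\mathrm{valid}}$ is automatic for a bijection of a set onto itself), the plan is to identify $T_{\mathrm{valid}}$ with the symmetric group $\mathrm{Sym}(X_{\mathrm{valid}})$ and read off every claim from that identification. First I verify the group axioms directly. \emph{Closure}: a composition of two bijections of $X_{\mathrm{valid}}$ is again a bijection of $X_{\mathrm{valid}}$. \emph{Associativity}: this is associativity of function composition, exactly as in the proof of Theorem~\ref{thm:acid}. \emph{Identity}: the identity map $\mathrm{id}_{X_{\mathrm{valid}}}$ lies in $T_{\mathrm{valid}}$. \emph{Inverses}: the set-theoretic inverse $T^{-1}$ of a bijection is a bijection of $X_{\mathrm{valid}}$ onto itself. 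This last point is the contrast with Corollary~\ref{cor:not-group}: by restricting to bijections of the valid set we recover the axiom $v=1$ that general ACID transactions lack. \emph{Finiteness}: $\abs{T_{\mathrm{valid}}} = \abs{X_{\mathrm{valid}}}!$.

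Next, the natural action $(T,x) \mapsto T(x)$ is a group action: $\mathrm{id}(x) = x$ and $(T_1 \circ T_2)(x) = T_1(T_2(x))$. This is the same verification as in Theorem~\ref{thm:burnside}, only simpler, since there is no conjugation. Orbits of a group action partition the set. The relation $x \sim y \iff \exists T,\ T(x) = y$ is reflexive (identity), symmetric (inverses) and transitive (closure), so it is an equivalence relation. By Burnside's lemma, as in Theorem~\ref{thm:burnside}, the number of classes is $\abs{T_{\mathrm{valid}}}^{-1}\sum_T \abs{\Fix(T)}$.

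The main obstacle is that the statement, read literally, is degenerate. The full symmetric group acts transitively on $X_{\mathrm{valid}}$, so there is a single orbit, which is all of $X_{\mathrm{valid}}$ whenever it is nonempty. I would state this honestly and then make the theorem meaningful by passing to subgroups. For any subgroup $G \le T_{\mathrm{valid}}$, for instance one generated by integrity-preserving transformations as in Proposition~\ref{prop:star-schema}, the orbits $X_{\mathrm{valid}}/G$ form a nontrivial partition. For the final clause about codes, I would index the orbits by $\{1,\dots,N\}$ with $N = \abs{X_{\mathrm{valid}}/G}$. I would then embed this index set in $\mathbb{F}_q$ with $q \ge N$, or more generally in a codeword set $C \subseteq \Dn$ with $\abs{C} \ge N$ built from Theorem~\ref{thm:field-case} or Corollary~\ref{cor:prime}. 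Distinct classes then map to distinct codewords at distance $\ge n-k+1$, so they are distinguishable, and Theorem~\ref{thm:correction} guarantees correctability under up to $\lfloor (n-k)/2\rfloor$ coordinate errors. The delicate point I expect is making ``can be distinguished'' precise: I would take it to mean the existence of an injective labelling of orbits into a code of positive minimum distance, which the construction above provides.
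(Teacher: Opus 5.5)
Your proof is correct, and its core matches the paper's. You check the group axioms in the same order: closure, associativity of composition, the identity map, set-theoretic inverses, and finiteness via $\mathrm{Sym}(X_{\mathrm{valid}})$. You handle the coding clause with the same device, labelling orbits injectively and passing the labels through a Reed--Solomon code from Theorem~\ref{thm:field-case}.

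What you add, and the paper's proof does not notice, is that $T_{\mathrm{valid}}$ is the \emph{whole} symmetric group on $X_{\mathrm{valid}}$. It therefore acts transitively, so the literal statement produces a single orbit, and the paper's labelling step is applied with $N=1$.

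Your passage to an arbitrary subgroup $G \le T_{\mathrm{valid}}$ includes the stated case $G = T_{\mathrm{valid}}$, so nothing is lost. It is also the version in which orbit labels can carry real information. That is not automatic, however: any transitive $G$ again gives one orbit, so ``a nontrivial partition'' should read ``a possibly nontrivial partition''.

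Your reading of ``distinguished'' is an injective map from orbits into a codeword set $C$ with $\lvert C\rvert \geq N$ and positive minimum distance. This is slightly more flexible than the paper's version, which requires $N \leq q$ so that each orbit receives a single field symbol.
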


\begin{proof}
\emph{Group structure.}

\emph{Closure.}\;
If $T_1, T_2 \in T_{\mathrm{valid}}$ are bijections
$X_{\mathrm{valid}} \to X_{\mathrm{valid}}$, then
$T_1 \circ T_2$ is a bijection mapping
$X_{\mathrm{valid}}$ onto itself.

\emph{Associativity.}\;
Inherited from function composition.

\emph{Identity.}\;
The identity map
$\mathrm{id}_{X_{\mathrm{valid}}}$ is a bijection
preserving~$X_{\mathrm{valid}}$.

\emph{Inverse.}\;
If $T \colon X_{\mathrm{valid}} \to X_{\mathrm{valid}}$ is a
bijection, then $T^{-1}$ exists as a set-theoretic inverse
and satisfies
$T^{-1}(X_{\mathrm{valid}}) = X_{\mathrm{valid}}$, because~$T$
is \emph{surjective} onto~$X_{\mathrm{valid}}$ by definition.
Hence $T^{-1} \in T_{\mathrm{valid}}$.

\emph{Finiteness.}\;
Since $X_{\mathrm{valid}}$ is finite,
$T_{\mathrm{valid}}$ is a subgroup of the symmetric group
$\mathrm{Sym}(X_{\mathrm{valid}})$ and is therefore finite.

\medskip
\emph{Equivalence classes and encoding.}

The action of~$T_{\mathrm{valid}}$ on~$X_{\mathrm{valid}}$
partitions the latter into orbits.  Denote the set of orbits
by $X_{\mathrm{valid}} / T_{\mathrm{valid}}$.  If the number of
orbits~$N = \abs{X_{\mathrm{valid}} / T_{\mathrm{valid}}}$
satisfies $N \leq q$ for some prime power~$q$, one can label the
orbits injectively by elements of~$\Fq$ and apply a
Reed--Solomon code (Theorem~\ref{thm:field-case}) to these
labels.  The MDS distance of the code ensures that distinct
orbit labels are separated by $d^{\star}_H \geq \dmin$.
A configuration violating a constraint necessarily lies outside
$X_{\mathrm{valid}}$ and hence outside every orbit, making the
violation detectable.
\end{proof}

\begin{remark}\label{rem:encoding-model}
The algebraic encoding above uses orbit labels as code symbols.
A complete implementation would require specifying how
configurations are mapped to labels, how the label alphabet
acquires a field structure, and how decoding identifies violated
constraints.  These implementation aspects are left for
future work; the present theorem establishes the
group-theoretic foundation.
\end{remark}

\begin{theorem}[Detection of integrity violations]
\label{thm:completeness}
Let~$y$ be a configuration that violates some constraint
$c \in \mathcal{C}$, so $y \notin X_{\mathrm{valid}}$.
Then $y$ does not belong to any orbit of~$T_{\mathrm{valid}}$
on~$X_{\mathrm{valid}}$, and is therefore distinguishable from
every valid configuration under the algebraic code
of Theorem~\ref{thm:application}.
\end{theorem}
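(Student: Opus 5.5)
The plan is to argue directly from the definitions in Theorem~\ref{thm:application}. First, every orbit of $T_{\mathrm{valid}}$ is a subset of $X_{\mathrm{valid}}$. For $x \in X_{\mathrm{valid}}$ the orbit is $\{T(x) : T \in T_{\mathrm{valid}}\}$, and each $T$ maps $X_{\mathrm{valid}}$ into itself, so the orbit stays inside $X_{\mathrm{valid}}$. By the group structure established in Theorem~\ref{thm:application}, the orbits partition $X_{\mathrm{valid}}$, so their union is exactly $X_{\mathrm{valid}}$. Since $y$ violates $c \in \mathcal{C}$, the definition of $X_{\mathrm{valid}}$ as the set of configurations satisfying all constraints gives $y \notin X_{\mathrm{valid}}$. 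Hence $y$ lies in no orbit. This settles the first assertion, which is purely set-theoretic.

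For the distinguishability claim I would make the encoding of Theorem~\ref{thm:application} explicit. The orbit labelling is an injection $\lambda \colon X_{\mathrm{valid}}/T_{\mathrm{valid}} \to \Fq$. Composing it with a Reed--Solomon encoder from Theorem~\ref{thm:field-case} gives a code $C \subseteq \Fq^{n}$. The encoding map $E = \mathrm{RS} \circ \lambda \circ \pi$, where $\pi$ is the orbit projection, is defined only on $X_{\mathrm{valid}}$. I would therefore extend it to the full configuration space $X$ by sending every $y \notin X_{\mathrm{valid}}$ to a reserved word. Two concrete options are: use a label $\lambda_\bot \in \Fq \setminus \lambda(X_{\mathrm{valid}}/T_{\mathrm{valid}})$, which is available when $N < q$, or declare the configuration ``rejected'' outright. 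Either way $E(y) \neq E(x)$ for every valid $x$. With the reserved-label choice, $d^{\star}_H(E(y),E(x)) \geq \dmin$ by the MDS property, since the reserved label is a distinct message symbol. Distinguishability then follows from Theorem~\ref{thm:correction}: the Hamming balls of radius $\lfloor(\dmin-1)/2\rfloor$ around the images of distinct labels are disjoint.

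The main obstacle is that the statement does not specify how $y$ is presented to the code, because the code of Theorem~\ref{thm:application} only sees orbit labels (Remark~\ref{rem:encoding-model}). I would state the extension convention explicitly as part of the proof and require $N < q$ so that a spare label exists. Under that convention the detection claim is essentially membership testing in $X_{\mathrm{valid}}$, and all the weight of the argument rests on the first paragraph.
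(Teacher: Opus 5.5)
Your first paragraph is the paper's proof, phrased directly rather than by contradiction (the paper assumes $y = T(x)$ for some $x \in X_{\mathrm{valid}}$ and $T \in T_{\mathrm{valid}}$, and concludes $y \in X_{\mathrm{valid}}$). The paper reads ``distinguishable'' as nothing more than this orbit non-membership, so your reserved-label extension is extra formalization rather than a required step; moreover your $N < q$ condition holds automatically, since $T_{\mathrm{valid}}$ as defined is all of $\mathrm{Sym}(X_{\mathrm{valid}})$, which acts transitively, giving $N \leq 1$.
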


\begin{proof}
Suppose for contradiction that~$y$ belongs to the orbit of
some $x \in X_{\mathrm{valid}}$.  Then there exists
$T \in T_{\mathrm{valid}}$ with $T \cdot x = y$.  But
$T(X_{\mathrm{valid}}) = X_{\mathrm{valid}}$ by definition
of~$T_{\mathrm{valid}}$, and $x \in X_{\mathrm{valid}}$,
so $y = T(x) \in X_{\mathrm{valid}}$ --- contradicting
$y \notin X_{\mathrm{valid}}$.
\end{proof}

\newpage
\section{Open Problems}
\label{sec:open}

The framework of this paper suggests several mathematical research
directions.

\begin{description}[style=nextline]

\item[\textbf{P1.} Constructive codes over non-abelian groups.]
The definitions of Section~\ref{sec:framework} admit arbitrary
finite groups, but the constructions of
Theorems~\ref{thm:field-case} and~\ref{thm:general-case}
require abelianity.  Can one construct algebraic codes with
controlled minimum distance over non-abelian groups such as the
symmetric group~$S_n$ or the dihedral groups~$D_k$?  The
classification of Section~\ref{sec:classification} assigns these
structures to Class~B ($0.6 \leq \Ef < 0.8$); a constructive
theorem would determine whether this bound is tight.

\item[\textbf{P2.} Tight bounds for codes over abelian groups.]
Establish sharp analogues of the Singleton and Hamming bounds for
codes over finite abelian groups that are not elementary abelian
(e.g., $\Zn{4}$, $\Zn{9}$).  The algebraic distance~$d^{\star}$
may admit bounds that depend on the group decomposition.

\item[\textbf{P3.} Algebraic theory of symmetric databases.]
Develop a complete characterisation of database schemas admitting
large symmetry groups~$G$ (Theorem~\ref{thm:burnside}).  Which
structural properties of a relational schema maximise~$\abs{G}$,
thereby minimising the number of inequivalent query classes?

\item[\textbf{P4.} Module-theoretic generalisation.]
Extend the constructions of Section~\ref{sec:existence} from
$\Z$-modules (finite abelian groups) to modules over commutative
rings and lattice-ordered structures, as outlined in the algebraic
hierarchy of Remark~\ref{rem:z-module}.  In particular, determine
whether the efficiency functional~$\Phi$ and the robustness
result of Theorem~\ref{thm:robustness} extend to codes over
$A$-modules for rings~$A$ beyond~$\Z$.

\item[\textbf{P5.} Principled derivation of weights for~$\Phi$.]
Theorem~\ref{thm:robustness} establishes that the qualitative
ranking $\mathcal{A} > \mathcal{N} > \mathcal{M}$ is independent
of the weights $(\alpha, \beta, \gamma)$.  However, the
\emph{numerical} values of~$\Phi$ depend on this choice.
Derive the weights from an information-theoretic or
decision-theoretic argument to give quantitative meaning to the
efficiency scores, beyond the ordinal ranking already established.

\end{description}

\newpage
\section{Conclusion}
\label{sec:conclusion}

We have introduced a unified algebraic framework for data coding
that generalises classical linear coding theory to arbitrary finite
groups.  The contributions fall into three categories.

\paragraph{Proved.}
\begin{itemize}
  \item ACID transactions form a monoid under sequential composition,
        with each ACID property corresponding to a specific monoid
        invariant (Theorem~\ref{thm:acid}).  The monoid is not a
        group (Corollary~\ref{cor:not-group}).
  \item Schema-preserving transformations form a finite group
        whose orbits are counted by Burnside's lemma
        (Theorem~\ref{thm:burnside}).  To the best of our knowledge,
        this is the first application of Burnside's lemma to
        database schema equivalence classification.
  \item Every integrity-preserving bijection of a data system
        defines an algebraisable equivalence class under a finite
        group action (Theorem~\ref{thm:application}), and every
        integrity violation is detectable
        (Theorem~\ref{thm:completeness}).
\end{itemize}

\paragraph{Demonstrated.}
\begin{itemize}
  \item Explicit code constructions over $\Zn{5} \cong \mathbb{F}_5$
        (Example~\ref{ex:z5}), $\Zn{4}$ via the Hammons
        construction (Example~\ref{ex:z4}), and $(2^U, \triangle)$
        for set-valued anomaly detection (Example~\ref{ex:powerset}).
  \item The $\Zn{4}$ and $(2^U, \triangle)$ instances produce codes
        inaccessible to classical $\Fq$-linear theory, confirming
        the framework's value beyond the standard setting.
\end{itemize}

\paragraph{Proposed.}
\begin{itemize}
  \item An encoding efficiency measure~$\Ef$
        (Definition~\ref{def:efficiency}) and an optimisation
        functional~$\Phi$ (Definition~\ref{def:phi}), together with
        a robustness proposition showing that the induced ranking is
        preserved under three representative weight choices for the
        structures studied
        (Proposition~\ref{thm:robustness},
        Table~\ref{tab:phi-verification}).
\end{itemize}

The framework connects abstract group theory with coding-theoretic
properties in a manner that appears mathematically natural and
suggests multiple directions for further algebraic investigation.

\newpage
======================================================================
\appendix
\section{Notation}\label{app:notation}

\begin{center}
\small
\textbf{Table A.1.} Algebraic Framework (Section~\ref{sec:framework})
\medskip

\begin{tabular}{@{}p{3.8cm}p{8.5cm}@{}}
\toprule
Symbol & Meaning \\
\midrule
$D$ & Finite nonempty set of data elements \\
$(D,\star)$ & Algebraic data structure: finite group \\
$e$ & Identity element of $(D,\star)$ \\
$(M,\circ)$ & Message space (finite group) \\
$\varphi \colon M \to \Dn$ & Injective group homomorphism (encoding) \\
$C = \Im(\varphi) \subseteq \Dn$ & Algebraic code over data \\
$\varphi(m_1 \circ m_2) = \varphi(m_1) \star \varphi(m_2)$
  & Structural preservation property \\
$d^{\star}_H(x,y)$ & Algebraic Hamming distance:
  $\abs{\{i : x_i \star y_i^{-1} \neq e\}}$ \\
$\delta \colon D \to \R_{\geq 0}$ & Weight function satisfying
  axioms (i)--(iii) \\
$d^{\star}(x,y)$ & Weighted algebraic metric:
  $\sum_i \delta(x_i \star y_i^{-1})$ \\
\bottomrule
\end{tabular}
\end{center}

\begin{center}
\small
\textbf{Table A.2.} Existence and Construction
(Section~\ref{sec:existence})
\medskip

\begin{tabular}{@{}p{3.8cm}p{8.5cm}@{}}
\toprule
Symbol & Meaning \\
\midrule
$q = \abs{D}$ & Order of the finite group $(D,\star)$ \\
$\psi \colon D \to \Fq$ & Group isomorphism (field case) \\
$[n,k,d]$ & Code parameters: length, dimension, min.\ distance \\
$d = n-k+1$ & Singleton bound (attained by MDS codes) \\
$t = \floor{(d-1)/2}$ & Error-correction capacity \\
$c = r \star e^{-1}$ & Error correction using group inverses \\
$O(n^2)$ & Decoding complexity (syndrome-based) \\
$D \cong \Zn{p_1^{a_1}} \times \cdots \times \Zn{p_r^{a_r}}$
  & Fundamental Theorem decomposition \\
\bottomrule
\end{tabular}
\end{center}

\begin{center}
\small
\textbf{Table A.3.} Classification Methodology
(Section~\ref{sec:classification})
\medskip

\begin{tabular}{@{}p{3.8cm}p{8.5cm}@{}}
\toprule
Symbol & Meaning \\
\midrule
$\mathcal{P}(D,\star)$ & Parameter vector:
  $(\abs{D}, \mathrm{type}, \ord(\star), \gamma, \delta)$ \\
$\gamma \in [0,1]$ & Degree of commutativity \\
$\delta \in [0,1]$ & Invertibility density \\
$\Ef(D,\star)$ & Encoding efficiency:
  $\frac{k \cdot d}{n \cdot \log_2\abs{D}} \cdot
   \frac{\log_2 n^3}{\log_2 T_{\mathrm{dec}}}$ \\
$\Phi(D,\star)$ & Optimisation functional with weights
  $\alpha + \beta + \gamma = 1$ \\
Class A: $\Ef \geq 0.8$ & Optimal (finite fields) \\
Class B: $0.6 \leq \Ef < 0.8$ & Efficient (non-abelian groups) \\
Class C: $0.4 \leq \Ef < 0.6$ & Acceptable (monoids) \\
Class D: $\Ef < 0.4$ & Suboptimal \\
\bottomrule
\end{tabular}
\end{center}

\begin{center}
\small
\textbf{Table A.4.} Original Results
(Section~\ref{sec:results})
\medskip

\begin{tabular}{@{}p{3.8cm}p{8.5cm}@{}}
\toprule
Symbol & Meaning \\
\midrule
$\mathcal{S}$ & Database state space \\
$T \colon \mathcal{S} \to \mathcal{S}$ & ACID transaction \\
$(\mathcal{T}, \circ)$ & Monoid of ACID transactions (not a group) \\
$T_{\varnothing}(s) = s$ & Empty transaction (identity element) \\
$S = \{R_1,\ldots,R_n\}$ & Database schema \\
$G$ & Finite group of schema-preserving bijections \\
$X$ & Finite set of valid data configurations \\
$\Fix(g) = \{x \in X : g \cdot x = x\}$
  & Configurations invariant under $g$ \\
$\abs{X/G} = \frac{1}{\abs{G}} \sum_{g \in G} \abs{\Fix(g)}$
  & Burnside's lemma: inequivalent configurations \\
$\mathcal{D} = (S, \mathcal{C}, \mathcal{F})$
  & Data system: schema, constraints, func.\ dependencies \\
$T_{\mathrm{valid}}$ & Finite group of bijections preserving
  $(\mathcal{C}, \mathcal{F})$ \\
\bottomrule
\end{tabular}
\end{center}

\begin{center}
\small
\textbf{Table A.5.} Standard Mathematical Symbols
\medskip

\begin{tabular}{@{}p{3.8cm}p{8.5cm}@{}}
\toprule
Symbol & Meaning \\
\midrule
$\Fq,\; \mathbb{F}_{p^k}$ & Finite field of $q = p^k$ elements \\
$\Zn{n}$ & Ring of integers modulo $n$ \\
$\Zn{p}^k$ & Elementary abelian $p$-group of rank $k$ \\
$S_n$ & Symmetric group on $n$ symbols \\
$D_k$ & Dihedral group of order $2k$ \\
$(2^U, \triangle)$ & Power set under symmetric difference \\
$\oplus$ & Exclusive or (XOR); addition in $\mathbb{F}_2$ \\
$\cong$ & Isomorphism of algebraic structures \\
$O(f(n))$ & Asymptotic upper bound \\
$\floor{x},\; \ceil{x}$ & Floor and ceiling functions \\
\bottomrule
\end{tabular}
\end{center}
\newpage
\nocite{*}
\bibliographystyle{plain}
\bibliography{references}
\end{document}